\pgfplotsset{compat=1.7}
\newcommand{\dt}{\, \textup{d} t}
\newcommand{\ds}{\, \textup{d} s }
\newcommand{\dx}{\, \textup{d} x}
\newcommand{\dint}{\displaystyle\int}
\newcommand{\R}{\mathbb{R}} % real
\newcounter{rownumber}
\newtheorem{theorem}{Theorem}
\newtheorem{lemma}{Lemma}
\newtheorem{proposition}{Proposition}
\newtheorem*{assumption*}{Assumptions on the memory kernel}
\numberwithin{lemma}{section}
\numberwithin{proposition}{section}
\numberwithin{theorem}{section}
\numberwithin{equation}{section}
\newcommand{\leqnomode}{\tagsleft@true}
\newcommand{\reqnomode}{\tagsleft@false}
\title[The JMGT wave equation in hereditary fluids]{On the Jordan--Moore--Gibson--Thompson wave equation in hereditary fluids with quadratic gradient nonlinearity}
\subjclass[2010]{35L75, 35G25}
\keywords{nonlinear acoustics, nonlocal wave equation, relaxing media, memory kernel, gradient nonlinearity}
\author[V. Nikoli\'c \& B. Said-Houari ]{\bfseries Vanja Nikoli\'c$^*$ and Belkacem Said-Houari}
\address{ 
Department of Mathematics \\ 
Radboud University   \\ 
Heyendaalseweg 135,
6525 AJ Nijmegen, The Netherlands}
\email{vanja.nikolic@ru.nl} 
\address{  
	Department of Mathematics\\ College of Sciences\\ University of
Sharjah, P. O. Box: 27272 \\ Sharjah, United Arab Emirates}
\email{bhouari@sharjah.ac.ae}
\thanks{$^*$Corresponding author: Vanja Nikoli\'c, \href{mailto:vanja.nikolic@ru.nl}{vanja.nikolic@ru.nl}}
\begin{document}
\vspace*{12mm}
\begin{abstract}
We prove global solvability of the third-order in time Jordan--More--Gibson--Thompson acoustic wave equation with memory in $\R^n$, where $n \geq 3$. This wave equation models ultrasonic propagation in relaxing hereditary fluids and incorporates both local and cumulative nonlinear effects. The proof of global solvability is based on a sequence of high-order energy bounds that are uniform in time, and derived under the assumption of an exponentially decaying memory kernel and sufficiently small and regular initial data.
\end{abstract}  
\vspace*{10mm} 
\maketitle           
    
\section{Introduction}
The present paper focuses on the analysis of a third-order in time integro-differential equation arising in nonlinear acoustic wave propagation through viscous fluids with memory. We are motivated in this study by an increasing number of applications of high-frequency sound waves in medicine and industry~\cite{maresca2017nonlinear, he2018shared, melchor2019damage, duck2002nonlinear}. The propagation of ultrasound waves is naturally nonlinear and, therefore, the theory of nonlinear partial differential equations can offer a valuable insight into the ultrasonic wave behavior.  \\
\indent Of particular relevance here are the ultrasonic waves in relaxing hereditary media. The relaxation mechanisms can occur, for example, if there is an impurity in the fluid and they are known to introduce memory effects into propagation. The acoustic pressure then depends on the medium density at all previous times, resulting in a nonlocal in time wave equation; see, for example, the recent book~\cite{holm2019waves}. In particular, we investigate the following nonlinear acoustic wave equation:
\begin{equation}   
\begin{aligned}
&\tau \psi_{ttt}+\alpha \psi_{tt}-c^2\Delta \psi-b \Delta \psi_t+%
\displaystyle\int_{0}^{t}g(s)\Delta \psi(t-s)\ds
=\left( k \psi_{t}^{2}+
|\nabla \psi|^2\right)_t.
\end{aligned}
\end{equation}
We refer to Section~\ref{Sec:ProblemSetting}  below for its physical background. Our work here extends the analysis in~\cite{nikolic2020mathematical} by taking into account local effects in nonlinear sound propagation, which leads to a quadratic gradient nonlinearity in the model above. The energy arguments of ~\cite{nikolic2020mathematical} are then not adequate to capture such nonlinear effects. Instead, we have to involve higher-order energies of our system and devise new estimates. In addition, we extend the analysis in $\R^n$ to hold for all $n \geq 3$.  \\
\indent We organize the paper as follows. Section~\ref{Sec:ProblemSetting} discusses modeling of nonlinear acoustic propagation in relaxing hereditary media and the relevant related work. Section~\ref{Sec:Preliminaries} then collects theoretical results that are helpful in later proofs. In Section~\ref{Sec:FunctionalSetting}, we rewrite the problem as a first-order evolution equation and discuss the semigroup solution of the linearization. This serves as a basis in Section~\ref{Sec:LocalExistence} to prove the local solvability of the nonlinear problem. Section~\ref{Sec:EnergyEstimates} is devoted to deriving energy estimates that are uniform in time and thus crucial for proving global well-posedness. Finally, in Section~\ref{Sec:GlobalExistence}, we extend the existence result to $T=\infty$.
\section{Modeling and previous work  } \label{Sec:ProblemSetting}
Nonlinear acoustics studies sound waves of sufficiently large amplitudes, which makes using the full Navier--Stokes system of governing equations in fluid dynamics necessary. The equations connect the following quantities:
\begin{itemize}
	\item the pressure $u$, split into its mean and alternating part \[u=u_0+u' \quad \text{with} \ \nabla u_0=0;\] 
	\item the velocity $\boldsymbol{v}=\boldsymbol{v}_0+\boldsymbol{v}'$, which is assumed to be irrotational; \vspace*{1mm}
	\item the mass density $\varrho$, where $\varrho=\varrho_0+\varrho'$ with $\varrho_{0, t}=0$; \vspace*{1mm}
	\item the specific entropy $\eta$, where $\eta=\eta_0+\eta'$; \vspace*{1mm}
	\item the temperature $\theta$, where $\theta=\theta_0+\theta'$; \vspace*{1mm}
	\item the heat flux $\boldsymbol{q}$.
\end{itemize}
 The scalar field $u^\prime$ is called the acoustic pressure and vector field $\boldsymbol{v}^\prime$ the acoustic particle velocity. The governing equations include the conservation of momentum (the Navier--Stokes equation), mass, and energy: 
\begin{equation}
\begin{aligned}
& \varrho \frac{\partial \boldsymbol{v}^\prime}{\partial t}+\varrho (\boldsymbol{v}\cdot \nabla) \boldsymbol{v}+\nabla u^\prime=\left(\frac{4 \mu_v}{3}+\eta_v \right) \Delta \boldsymbol{v}^\prime,
 \\
&  \frac{\partial \varrho}{\partial t}+\nabla \cdot (\varrho \boldsymbol{v})=0, \\
& \varrho \left(\frac{\partial E}{\partial t}+(\nabla \cdot \boldsymbol{v})E \right)+u \nabla \cdot \boldsymbol{v} = K \Delta \theta ^\prime+\eta (\nabla \cdot \boldsymbol{v})^2+\frac{1}{2}(\partial_i v_j+\partial_j v_i-\frac{2}{3}\nabla \cdot \boldsymbol{v}\, \delta_{ij}),
\end{aligned}
\end{equation}
where $K$ is the heat conductivity, $\mu_v$ is the shear viscosity, $\eta_v$ denotes the bulk viscosity, and $\delta_{ij}$ is the Kroneker delta. The state equation relates the pressure and density within a fluid:
\begin{equation} 
u=u(\varrho, \eta).
\end{equation}
By expanding it in a Taylor series around the equilibrium state $(\varrho_0, \eta_0)$, one arrives at
\begin{equation}
\begin{aligned}
u=& \, \begin{multlined}[t]u_0+\varrho_0\Bigl(\frac{\partial u}{\partial \varrho}\Bigr)_{\varrho=\varrho_0, \eta}\frac{\varrho-\varrho_0}{\varrho_0}+\frac{1}{2}\varrho_0^2\Bigl(\frac{\partial^2 u}{\partial \varrho^2}\Bigr)_{\varrho=\varrho_0, \eta}\left(\frac{\varrho-\varrho_0}{\varrho_0}\right)^2\\+\Bigl(\frac{\partial u}{\partial \eta}\Bigr)_{\varrho, \eta=\eta_0}(\eta-\eta_0)+\ldots \end{multlined}
\end{aligned}
\end{equation}
This equation can be rewritten as
\begin{equation} 
\begin{aligned}
u-u_0=A\frac{\varrho-\varrho_0}{\varrho_0}+\frac{B}{2!}\Bigl(\frac{\varrho-\varrho_0}{\varrho}\Bigr)^2+\Bigl(\frac{\partial u}{\partial \eta}\Bigr)_{\varrho, \eta=\eta_0}(\eta-\eta_0)+\ldots,
\end{aligned}
\end{equation}
with the coefficients
\begin{equation}
A=\varrho_0\Bigl(\frac{\partial u}{\partial \varrho}\Bigr)_{\varrho=\varrho_0, \eta} \equiv \varrho_0 c_0^2, \qquad B=\varrho_0^2\Bigl(\frac{\partial^2 u}{\partial \varrho^2}\Bigr)_{\varrho=\varrho_0, \eta},
\end{equation}
where $c>0$ is the speed of sound. Hence we have 
\begin{equation} \label{Taylor_expansion}
\begin{aligned}
u-u_0=c^2(\varrho-\varrho_0)+\frac{c^2}{\varrho_0}\frac{B}{2A}(\varrho-\varrho_0)^2+\Bigl(\frac{\partial u}{\partial \eta}\Bigr)_{\varrho, \eta=\eta_0}(\eta-\eta_0)+\ldots
\end{aligned}
\end{equation}
The ratio $B/A$ indicates the nonlinearity of the equation of state for a given fluid. Lastly, to close the system, the classical Fourier law of heat conduction is employed in the equation for the conservation of energy:
\begin{equation}
\boldsymbol{q}= -K \nabla \theta,
\end{equation} 
\indent The so-called weakly nonlinear acoustic modeling introduces and studies approximations of these governing equations. We refer to~\cite{jordan2016survey} for a survey on such models.  Since the fluid is irrotational, a scalar acoustic velocity potential can be introduced as
\begin{equation}
\begin{aligned}
& \boldsymbol{v}= -\nabla \psi, \\
& u-u_0=-\varrho \psi_t,
\end{aligned}
\end{equation}
and these weakly nonlinear acoustic equations are typically expressed in terms of $\psi$.  The deviations of $\varrho$, $u$, $\eta$, and $\theta$ from their equilibrium values are assumed to be small. By neglecting all third and higher order terms in the deviations when combining the governing equations, one obtains the classical Kuznetsov wave equation:
\begin{equation}\label{Kuznt}
\psi_{tt}-c^{2}\Delta \psi-\delta \Delta \psi_{t}=\left( \frac{1}{c^{2}}\frac{B}{2A}(\psi_{t})^{2}+|\nabla \psi|^{2}\right)_t. 
\end{equation}
For a detailed derivation, we refer the reader to~\cite{crighton1979model, coulouvrat1992equations, kuznetsov1971equations, kaltenbacher2007numerical, kaltenbacher2018fundamental}. The coefficient $\delta>0$ in the model denotes the so-called sound diffusivity. This strongly damped nonlinear equation is also by now well-understood from the point of view of a rigorous mathematical analysis; see, for example,~\cite{mizohata1993global, DekkersRozanova}. In~\cite{kaltenbacher2012analysis, Wilke}, the well-posedness analysis is performed for the equation reformulated in terms of the pressure and acoustic particle velocity.\\
\indent The Kuznetsov equation incorporates both local and cumulative nonlinear effects. If local effects can be neglected, then
\begin{equation} \label{local_approximation}
|\nabla \psi|^2 \approx \frac{1}{c^2}\psi_t^2,
\end{equation} 
and one obtains the simpler Westervelt model~\cite{westervelt1963parametric}. \\
\indent  It is known, however, that the Fourier law used in the derivation of the Kuznetsov equation predicts an infinite speed of heat propagation~\cite{liu1979instantaneous}. Indeed, the strong $\delta$ damping in the model is proven to lead to a parabolic-like behavior with anexponential decay of energy~\cite{kaltenbacher2012analysis, Wilke}. The Maxwell--Cattaneo temperature law can be used instead to avoid this paradox for acoustic waves:
\begin{equation}
\tau \boldsymbol{q}_t+\boldsymbol{q}=-K \nabla \theta,
\end{equation}
where $\tau>0$ denotes the thermal relaxation time. In this manner, one obtains a third-order hyperbolic equation: 
\begin{equation}\label{JMGT}
\tau \psi_{ttt}+\psi_{tt}-c^{2}\Delta \psi-b\Delta \psi_{t}= \left( \frac{1}{c^{2}}\frac{B}{2A}(\psi_{t})^{2}+|\nabla \psi|^2\right)_t,
\end{equation}
often called the  Jordan--Moore--Gibson--Thompson (JMGT) equation (of Kuznetsov-type); see~\cite{jordan2008nonlinear}. The coefficient $b>0$ in the equation is computed as 
\begin{equation} \label{b}
b=\delta +\tau c^2.
\end{equation}
This model is mathematically well-studied in terms of well-posedness and regularity of solutions both in bounded domains~\cite{KaltenbacherNikolic} and in $\R^3$~\cite{Racke_Said_2019}. Furthermore, in~\cite{KaltenbacherNikolic}, it is rigorously justified that the limit of \eqref{JMGT}  as $\tau \rightarrow 0$ leads to the Kuznetsov equation.  By neglecting local nonlinear effects and assuming \eqref{local_approximation}, a Westervelt-type JMGT equation is obtained. For its analysis in smooth bounded domains, we refer to~\cite{kaltenbacher2012well, KaltenbacherNikolic}. \\
\indent The basis of the nonlinear analysis lies in the results for the linearization, often referred to as the Moore--Gibson--Thompson equation:
\begin{equation}\label{MGT_2}
\tau \psi_{ttt}+\alpha\psi_{tt}-c^{2}\Delta \psi-b \Delta \psi_{t}=0,
\end{equation}
\noindent where $\alpha>0$ stands for the friction. When $b>0$, the results of~\cite{Kaltenbacher_2011} show that the linear dynamics of this model is described by a strongly continuous semigroup. The semigroup is exponentially stable provided that the so-called \emph{subcritical} condition holds:
\begin{eqnarray}\label{Condition_Parameters}
\alpha b-\tau c^2>0. 
\end{eqnarray}
Interestingly, the semigroup is conservative in the critical case $\alpha b=\tau c^2$ -- the energy is conserved despite the presence of sound diffusivity.  The linear MGT equation has been extensively studied lately; see, for example,~\cite{marchand2012abstract, conejero2015chaotic, pellicer2019optimal, PellSaid_2019_1, bucci2019feedback, bucci2019regularity}.\\
\indent  When relaxation processes occur in high-frequency waves, the acoustic pressure can depend on the medium density at \emph{all prior} times. These processes happen, for example, when there is an impurity in the fluid. The pressure-density relation then involves a memory term:
\begin{equation}
u-u_0=c^2(\varrho-\varrho_0)+\frac{c^2}{\varrho_0}\frac{B}{2A}(\varrho-\varrho_0)^2+\Bigl(\frac{\partial u}{\partial \eta}\Bigr)_{\varrho, \eta=\eta_0}(\eta-\eta_0)-\int_0^t g(t-s) \varrho'(s)\, \textup{d} s;
\end{equation}
cf.~\cite{holm2019waves}. The function $g$ is the relaxation memory kernel related to the occurring relaxation mechanism. \\
\indent Such memory effects in nonlinear wave propagation motivate our work in the present paper. Our object of study is the non-local JMGT equation with quadratic gradient nonlinearity given by
\begin{equation} \label{Main_Equation} 
\begin{aligned}
&\tau \psi_{ttt}+\alpha \psi_{tt}-c^2\Delta \psi-b \Delta \psi_t+%
\displaystyle\int_{0}^{t}g(s)\Delta \psi(t-s)\ds
=\left( k \psi_{t}^{2}+
|\nabla \psi|^2\right)_t.
\end{aligned}
\end{equation}
The constant $k \in \R$ indicates the nonlinearity of the equation. In the present work, we consider memory that involves only the acoustic velocity potential and not its time derivatives, which is sometimes regarded as memory of type I. \\
\indent The linear model associated with the JMGT equation with memory in the pressue form
\begin{equation}   \label{Main_Equation_linear}
\begin{aligned}
&\tau u'_{ttt}+\alpha u'_{tt}-c^2\Delta u'-b \Delta u'_t+%
\displaystyle\int_{0}^{t}g(s)\Delta z(t-s)\ds
=0.
\end{aligned}
\end{equation}
 has also been subject to extensive study recently. In~\cite{lasiecka2016moore}, the effects of different memories on the stability of the equation are investigated. In particular, the authors analyzed the equation in the pressure form with $z=u'$ (memory of type I), $z=u'_{t}$ (type II), or  $z= u'+u'_{t}$ (type III). If the memory kernel $g$ decays exponentially, the same holds for the solution for all three types of memory, provided that the non-critical condition \eqref{Condition_Parameters} holds. This result is extended in~\cite{lasiecka2015moore} to include not only an exponential decay rate of the memory kernel. \\
\indent The critical case $\alpha b =\tau c^2$, where the presence of memory is essential, is analyzed in~\cite{dell2016moore} with memory of type I (depending only on the pressure). With a strictly positive self-adjoint linear operator $A$ in place of $-\Delta$, the problem is exponentially stable if and only if $A$ is a bounded operator. In the case of an unbounded operator $A$, the corresponding energy decays polynomially with the rate $1/t$ for regular initial data.\\
\indent Neglecting local nonlinear effects in \eqref{Main_Equation} via relation \eqref{local_approximation} leads to a Westervelt-type JMGT equation with memory. In ~\cite{lasiecka2017global}, this third-order nonlinear equation is studied in the pressure form in the critical case $\alpha b= \tau c^2$ in the presence of memory type III (depending on $u'$ and $u'_t$). It is shown that an appropriate adjustment of the
memory kernel allows having solutions globally in time for sufficiently small and regular initial data. Global solvability for the equation in potential form in $\R^3$ for small and regular data is proven in~\cite{nikolic2020mathematical} in the non-critical case and with memory depending only on $\psi$. \\
\indent In relaxing media, the memory kernel is often given by the exponential function
\begin{equation}
g(s)=m c^2\exp{(-s/\tau)},
\end{equation}
where $m$ is the relaxation parameter; see~\cite[Chapter 1]{naugolnykh2000nonlinear} and \cite[Section 1]{lasiecka2017global}. Motivated by this, we make the following assumptions on the relaxation kernel throughout the paper; see also~\cite{dell2016moore, nikolic2020mathematical}.
\begin{assumption*} The memory kernel is assumed to satisfy the following conditions:
\begin{enumerate}
	\item[(G1)] \label{itm:first}  $g\in W^{1,1}(\R^+)$ and $g'$ is almost continuous on $\R^+=(0, +\infty)$. \vspace{0.1 cm}
	\item[(G2)] $g(s) \geq 0$ for all $s>0$ and 
	\reqnomode
	\begin{align} 
0< \int_{0}^{\infty}g(s)\ds < c^2. 	\vspace{0.1 cm}
	\end{align}
	\item[(G3)] There exists $\zeta>0$, such that the function $g$ satisfies the 
	differential inequality given by
	\begin{equation}
	g^\prime(s)\leq -\zeta g(s)
	\end{equation}
	 for every $s\in (0,\infty)$. 	\vspace{0.1 cm}
	\item[(G4)] It holds that $g^{\prime\prime}\geq0$ almost everywhere.\vspace{0.2 cm}
\end{enumerate}
\end{assumption*}
\section{Auxiliary theoretical results}\label{Sec:Preliminaries}
For future use in the analysis of equation \eqref{Main_Equation}, we recall here several helpful theoretical results and set the notation. We choose to work in the history framework of Dafermos~\cite{dafermos1970asymptotic}, following previous work on acoustic equations with memory; see, for example,~\cite{dell2016moore}. This is achieved by introducing the auxiliary past-history variable $\eta=\eta(t,s)$ for $t \geq 0$, defined as
\begin{equation}  \label{def of eta}
\eta(s)= \begin{cases}
\psi(t)-\psi(t-s), \quad  &0<s\leq t, \\
\psi(t), \quad &s>t.
\end{cases}
\end{equation}
If we choose $\eta\vert_{t=0}=\psi_0(x)$, we can rewrite our problem as 
\begin{equation}  \label{eta syst}
\begin{cases}
\tau \psi_{ttt}+\alpha \psi_{tt}-b \Delta \psi_{t}-c^2_g\Delta \psi-%
\displaystyle\int_{0}^{\infty}g(s)\Delta \eta(s)\ds 
= 2k\psi_{t}\psi_{tt}+2 \nabla \psi \cdot \nabla \psi_t,  \\[2mm] 
\eta_{t}(x,s)+\eta_{s}(x,s)=\psi_{t}(x,t),%
\end{cases}%
\end{equation}
where we have introduced the modified speed
\begin{align} \label{def_cg}
\ c^2_g=c^2-\displaystyle\int_{0}^{\infty}g(s)\ds. 	
\end{align}
Thanks to our assumptions on the memory kernel, we have $c^2_g>0$. The equations are supplemented with the initial data
\begin{equation}  \label{initial data}
\psi(x,0)=\psi_{0}(x),\qquad \psi_{t}(x,0)=\psi_{1}(x), \qquad \psi_{tt}(x,0)=\psi_{2}(x).
\end{equation}
\paragraph{\bf Setting $\boldsymbol{\alpha=1}$.}  We can take $\alpha=1$  without the loss of generality because we can always re-scale other coefficients in the equation. The subcritical condition  then reads as 
\begin{equation}\label{critical_condition_scaled}
b >\tau c^2,
\end{equation}
which is equivalent to requiring the sound diffusivity $\delta$ to be positive since $b=\delta+ \tau c^2$. The critical case corresponds to $b=\tau c^2$. We always require the presence of memory via the assumption $\tau c^2 > \tau c^2_g$; that is, $\int_0^\infty g(s)\ds >0$.
\subsection{Notation} In the present work, the constant $C$ always stands for a generic positive constant that does not depend on time, and it may have different value on different occasions. We write $x \lesssim y$ instead of $x \leq C y$.
\subsection{Helpful inequalities} We collect here several inequalities that are used throughout the proofs. We often rely on the following endpoint Sobolev embedding:
\begin{equation} \label{GNS_ineq}
\|\psi\|_{L^{\frac{2n}{n-2}}} \lesssim \|\nabla \psi\|_{L^2},
\end{equation}
which holds provided that $n \geq 3$; cf.~\cite{bahouri2011fourier}. Going forward, we therefore  assume $n \geq 3$. To treat the tri-linear terms, we also employ the inequality
\begin{equation} \label{trilinear_est}
\|f g h\|_{L^1} \lesssim \|f\|_{L^2}\|g\|_{L^n}\|h\|_{L^{\frac{2n}{n-2}}}
\end{equation}
together with the embedding
\begin{align} \label{embedding_2}
	&{H}^{\frac{n-2}{2}}(\R^n) \hookrightarrow L^n(\R^n). 
\end{align} 
Furthermore, when estimating the nonlinear terms in the equation, we will rely on the embedding
\begin{equation}
H^r(\R^n) \hookrightarrow L^{\infty}(\R^n), \qquad r> n/2.
\end{equation}
We will work with the space-differentiated equation as well, and so commutator estimates are particularly helpful. We introduce the commutator notation by
\begin{equation} \label{commutator}
[A,B]=AB-BA.
\end{equation}
It helps to note that
\begin{equation}\label{Derivative_Comuta}
\partial^\kappa(AB)=[\partial^\kappa,A]B+A\partial^\kappa B, \quad \kappa \geq 1.
\end{equation}
We have the following estimates.
\begin{lemma}[see Lemma 4.1 in~\cite{HKa06}.]
\label{Guass_symbol_lemma} Let $1\leq p,\,q,\,r\leq \infty $ and $%
1/p=1/q+1/r $. Then we have%
\begin{equation}
\Vert \nabla^\kappa( fg) \Vert _{L^p}\lesssim \Vert f\Vert _{L^q}\Vert \nabla^\kappa
g\Vert _{L^r}+\Vert g\Vert _{L^q}\Vert \nabla^\kappa f\Vert _{L^r} ,\quad \kappa \geq 0,
\label{First_inequaliy_Guass}
\end{equation}
and the commutator estimate
\begin{equation} \label{Second_inequality_Gauss}
\begin{aligned}
\Vert [ \nabla^{\kappa},f] g\Vert _{L^p}=&\Vert \nabla^\kappa(fg)-f\nabla^\kappa g\Vert_{L^p}\notag\\
\lesssim& \, \Vert \nabla f\Vert _{L^q}\Vert
\nabla^{\kappa-1}g\Vert _{L^r}+\Vert g\Vert _{L^q}\Vert \nabla^{\kappa}f\Vert _{L^r}
,\quad \kappa \geq 1.
\end{aligned}
\end{equation}
\end{lemma}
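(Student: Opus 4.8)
The plan is to deduce both estimates from the Leibniz rule, the Gagliardo--Nirenberg interpolation inequality, and Young's inequality, in the classical Moser/Kato--Ponce fashion. For the product estimate \eqref{First_inequaliy_Guass} (the case $\kappa=0$ being just H\"older's inequality), I would first expand, schematically,
\[
\nabla^\kappa(fg) = \sum_{j=0}^{\kappa} c_j\, \nabla^{j}f \cdot \nabla^{\kappa-j}g
\]
by the Leibniz rule, with the combinatorial coefficients absorbed into the constants $c_j$. To the $j$-th summand I would apply H\"older's inequality with a pair of exponents $(p_j, p_j')$, $1/p_j + 1/p_j' = 1/p$, and then the Gagliardo--Nirenberg inequalities
\[
\|\nabla^{j}f\|_{L^{p_j}} \lesssim \|\nabla^\kappa f\|_{L^r}^{j/\kappa}\|f\|_{L^q}^{1-j/\kappa}, \qquad \|\nabla^{\kappa-j}g\|_{L^{p_j'}} \lesssim \|\nabla^\kappa g\|_{L^r}^{1-j/\kappa}\|g\|_{L^q}^{j/\kappa},
\]
which force $1/p_j = (1-j/\kappa)/q + (j/\kappa)/r$ and $1/p_j' = (j/\kappa)/q + (1-j/\kappa)/r$; adding these gives exactly $1/p = 1/q + 1/r$, so the hypothesis of the lemma is precisely what makes this choice of $(p_j,p_j')$ admissible for every $j$. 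Setting $X := \|\nabla^\kappa f\|_{L^r}\|g\|_{L^q}$ and $Y := \|f\|_{L^q}\|\nabla^\kappa g\|_{L^r}$, each summand is bounded by $X^{j/\kappa}Y^{1-j/\kappa} \le X + Y$ by Young's inequality with conjugate exponents $\kappa/j$ and $\kappa/(\kappa-j)$ (the endpoint terms $j=0,\kappa$ being estimated directly by H\"older), and summing over $j$ yields \eqref{First_inequaliy_Guass}.

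For the commutator estimate \eqref{Second_inequality_Gauss} I would use that $[\nabla^\kappa,f]g = \nabla^\kappa(fg) - f\,\nabla^\kappa g$, so the $j=0$ term cancels and
\[
[\nabla^\kappa,f]g = \sum_{j=1}^{\kappa} c_j\, \nabla^{j}f \cdot \nabla^{\kappa-j}g.
\]
Now every factor $\nabla^{j}f$ carries at least one derivative and every factor $\nabla^{\kappa-j}g$ at most $\kappa-1$, so I would interpolate $\nabla^{j}f$ between $\nabla f$ and $\nabla^\kappa f$, and $\nabla^{\kappa-j}g$ between $g$ and $\nabla^{\kappa-1}g$:
\[
\|\nabla^{j}f\|_{L^{p_j}} \lesssim \|\nabla^\kappa f\|_{L^r}^{\frac{j-1}{\kappa-1}}\|\nabla f\|_{L^q}^{\frac{\kappa-j}{\kappa-1}}, \qquad \|\nabla^{\kappa-j}g\|_{L^{p_j'}} \lesssim \|\nabla^{\kappa-1}g\|_{L^r}^{\frac{\kappa-j}{\kappa-1}}\|g\|_{L^q}^{\frac{j-1}{\kappa-1}}.
\]
A computation identical to the one above shows that the associated H\"older exponents again satisfy $1/p_j + 1/p_j' = 1/p$, and since the two interpolation parameters $\frac{j-1}{\kappa-1}$ and $\frac{\kappa-j}{\kappa-1}$ sum to $1$, Young's inequality bounds each summand by $X + Y$, now with $X := \|\nabla^\kappa f\|_{L^r}\|g\|_{L^q}$ and $Y := \|\nabla f\|_{L^q}\|\nabla^{\kappa-1}g\|_{L^r}$. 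Summing gives \eqref{Second_inequality_Gauss}; the case $\kappa=1$ is trivial, since then the sum reduces to the single term $\nabla f \cdot g$, handled by H\"older alone.

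The part requiring genuine care is the bookkeeping of the interpolation exponents and, above all, the borderline situations: one must verify that the exponents $p_j$ produced above actually lie in the admissible range of the Gagliardo--Nirenberg inequality, and one must treat by hand the cases where $p_j \in \{1,\infty\}$, where $r=\infty$, or where the scale-invariant Gagliardo--Nirenberg inequality fails and has to be replaced by an inhomogeneous version or by a direct Sobolev embedding (as happens in our later applications, where $H^r \hookrightarrow L^\infty$ for $r>n/2$ plays the role of the endpoint). The constraint $1/p = 1/q + 1/r$ still pins down the exponents in all of these degenerate cases; a careful verification is carried out in \cite{HKa06}, to which we refer for the details.
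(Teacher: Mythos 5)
The paper does not prove this lemma at all: it is quoted verbatim as Lemma 4.1 of~\cite{HKa06} and used as a black box, so there is no in-paper argument to compare against. Your sketch is the standard (and correct) route to it --- Leibniz expansion, H\"older with exponents $(p_j,p_j')$ fixed by the scaling relation $1/p=1/q+1/r$, Gagliardo--Nirenberg interpolation of the intermediate derivatives, and weighted Young to absorb the cross terms into the two endpoint products --- and your exponent bookkeeping checks out in both the product and the commutator case (where dropping the $j=0$ term is exactly what lets you interpolate $f$ from $\nabla f$ upward and $g$ only up to $\nabla^{\kappa-1}g$). Your explicit flagging of the borderline exponents ($p_j\in\{1,\infty\}$, $r=\infty$) as the part requiring care, with deferral to the cited reference, is the appropriate level of detail for a lemma the paper itself only cites.
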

\noindent Finally, the following lemma will be useful in our energy arguments when proving global well-posedness.
\begin{lemma}[see Lemma 3.7 in~\cite{strauss1968decay}] 
\label{Lemma_Stauss} Let $M=M(t)$ be a non-negative continuous function such that
\begin{equation}
M(t)\leq C_1+C_2 M(t)^{\kappa}
\end{equation}
holds in some interval containing $0$, where $C_1$ and $C_2$ are positive constants and $\kappa>1$. If $M(0)\leq C_1$ and
\begin{equation}
C_1C_2^{1/(\kappa-1)}<(1-1/\kappa)\kappa^{-1/(\kappa-1)},
\end{equation}
then in the same interval it holds
\begin{equation}
M(t)<\frac{C_1}{1-1/\kappa}.
\end{equation}
\end{lemma}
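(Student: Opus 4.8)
The plan is to run a standard continuity (barrier) argument built on the auxiliary polynomial
\[
f(x)=C_1+C_2x^{\kappa}-x, \qquad x\geq 0,
\]
so that the assumed inequality reads precisely $f(M(t))\geq 0$ for $t$ in the interval $I\ni 0$. First I would record the shape of $f$: since $f'(x)=\kappa C_2x^{\kappa-1}-1$ and $\kappa>1$, the function $f$ is strictly decreasing on $(0,x_*)$ and strictly increasing on $(x_*,\infty)$ with unique minimizer $x_*=(\kappa C_2)^{-1/(\kappa-1)}$, while $f(0)=C_1>0$ and $f(x)\to\infty$ as $x\to\infty$. Using $x_*^{\kappa-1}=(\kappa C_2)^{-1}$ one gets $C_2x_*^{\kappa}=x_*/\kappa$, hence the minimum value equals
\[
f(x_*)=C_1-\Bigl(1-\frac1\kappa\Bigr)x_*=C_1-\Bigl(1-\frac1\kappa\Bigr)\kappa^{-1/(\kappa-1)}C_2^{-1/(\kappa-1)},
\]
and the smallness hypothesis $C_1C_2^{1/(\kappa-1)}<(1-1/\kappa)\kappa^{-1/(\kappa-1)}$ is exactly $f(x_*)<0$. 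It then follows that $f$ has precisely two positive roots $0<a<x_*<b$, and that $\{x\geq0:\ f(x)\geq0\}=[0,a]\cup[b,\infty)$.

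Next I would pin down where the trajectory starts. Because $f(C_1)=C_2C_1^{\kappa}>0$, the point $C_1$ lies in $[0,a]\cup[b,\infty)$; and since the hypothesis forces $C_1<(1-1/\kappa)x_*<x_*$, it must lie in the left component, so $C_1\leq a$ and hence $M(0)\leq C_1\leq a$. Continuity of $M$ now does the rest: $M(I)$ is a connected subset of $\R$ contained in $[0,a]\cup[b,\infty)$ and containing $M(0)\in[0,a]$, so it cannot reach $[b,\infty)$, whence $M(t)\leq a$ for every $t\in I$.

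To finish I would upgrade $M(t)\leq a$ to the stated strict bound. From $f(a)=0$ we have $a-C_1=C_2a^{\kappa}=(C_2a^{\kappa-1})\,a$, and since $a<x_*$ gives $C_2a^{\kappa-1}<C_2x_*^{\kappa-1}=1/\kappa$, we obtain $a-C_1<a/\kappa$, i.e.\ $a(1-1/\kappa)<C_1$, that is $a<C_1/(1-1/\kappa)$. Therefore $M(t)\leq a<C_1/(1-1/\kappa)$ throughout $I$, which is the assertion. The argument is elementary; the only genuinely delicate point is establishing the two-component structure of $\{f\geq0\}$ and checking that $M(0)$ lands in the left component — this is exactly where the sharp smallness threshold enters — while the remaining manipulations are bookkeeping.
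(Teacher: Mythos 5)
Your argument is correct and complete: the reduction to the sign of $f(x)=C_1+C_2x^{\kappa}-x$, the verification that the smallness hypothesis is exactly $f(x_*)<0$ at the unique minimizer, the connectedness argument keeping $M$ in the left component $[0,a]$, and the final estimate $a<C_1/(1-1/\kappa)$ via $C_2a^{\kappa-1}<1/\kappa$ are all sound. The paper itself gives no proof (it only cites Lemma 3.7 of Strauss), and your barrier/continuity argument is precisely the standard proof of that lemma, so there is nothing to add.
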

\section{The first-order system and the linearization} \label{Sec:FunctionalSetting}
We can see our equation as a first-order in time system by introducing
\begin{equation}\label{Change_Variable}
v=\psi_{t}, \quad w=\psi_{tt}.
\end{equation}
The JMGT equation can then be restated as
\begin{equation}  \label{Main_System}
\begin{cases}
\psi_{t}=v, \\ 
v_{t}=w, \\ 
\tau w_{t}=- w+c^2_g\Delta \psi+b\Delta v + \displaystyle%
\int_{0}^{\infty}g(s)\Delta\eta(s)\ds+2(k vw+\nabla \psi \cdot \nabla v),
\\ 
\eta_{t}=v-\eta_{s},%
\end{cases}%
\end{equation}
with the initial data given by
\begin{equation}  \label{Main_System_IC}
(\psi, v, w, \eta) |_{t=0} =(\psi_0, \psi_1, \psi_2, \psi_0).
\end{equation}
We then adapt the functional framework of~\cite{dell2016moore} to our setting. To work in the past-history framework of Dafermos, we introduce the weighted $L^{2}$-spaces,
\begin{equation}
L^2_{\tilde{g}}=L^{2}_{\tilde{g}}(\mathbb{R}^{+}, L^2(\mathbb{R}^{n}))
\end{equation}
with three types of weights: $\tilde{g} \in \{g, -g' , g''\}$. The space is endowed with the inner product 
\begin{equation}
\left(\eta,\tilde{\eta} \right)_{L^2, \tilde{g}}=%
\displaystyle\int_{0}^{\infty}\tilde{g}(s)\left(
\eta(s),\tilde{\eta}(s)\right)_{L^{2}(\mathbb{R}%
	^{n})} \ds
\end{equation}
for $\eta, \tilde{\eta} \in L^2_{\tilde{g}}$. The corresponding norm is 
\begin{equation}
\Vert\eta\Vert^{2}_{L^2, \tilde{g}}=\int_{0}^{\infty}\tilde{g}(s)\Vert%
\eta(s)\Vert_{L^{2}}^{2}\ds,
\end{equation}
\indent Let $n\geq 3$. In order to formulate our results, for an integer $m \geq 1$, we introduce the Hilbert spaces
\begin{equation} \label{Hs}
\begin{aligned}
\mathcal{H}^{m-1}=&\, \begin{multlined}[t] \{ \psi: \ \nabla \psi, \, \ldots, \nabla^{(m)} \psi \in L^2(\R^n)\} \times H^{m}(\R^n) \times {H}^{m-1}(\R^n)
\times \mathcal{M}^m,
\end{multlined}
\end{aligned}
\end{equation}
where 
\begin{equation}
\mathcal{M}^m= \{\eta: \nabla \eta, \ldots, \nabla^{(m)} \eta\in L^2_{-g'}\}.
\end{equation}
The corresponding norm is given by
\begin{equation} \label{norm_Hm-1}
\begin{aligned}
\|\Psi\|^2_{\mathcal{H}^{m-1}}=\|\nabla \psi\|^2_{H^{m-1}}+\|v\|^2_{H^{m}}+\|w\|^2_{H^{m-1}}+\|\nabla \eta\|^2_{H^{m-1},-g'}.
\end{aligned}
\end{equation}
We recall that for $n \geq 3$, the endpoint Sobolev embedding \eqref{GNS_ineq} holds in $\R^n$, so $\|\cdot\|_{\mathcal{H}^{m-1}}$ is indeed a norm.\\
\indent To further arrive at an initial-value problem for a first-order evolution equation, we set $\Psi=(\psi, v, w, \eta)^T$
with $\Psi_0=\Psi(0)$. Moreover, we define the operator $ \mathcal{A}$ as
\begin{equation}
\begin{aligned}
\mathcal{A}\begin{bmatrix}
\psi \\[1mm]
v \\[1mm]
w \\[3mm]
\eta 
\end{bmatrix} = \begin{bmatrix}
v \\[1mm]
w \\[1mm]
-\dfrac{1}{\tau} w+ \frac{c^2_g}{\tau} \Delta \psi+\frac{b}{\tau} \Delta v+\frac{1}{\tau}\displaystyle \int_0^\infty g(s)\Delta \eta(s) \ds \\[3mm]
v+\mathbb{T} \eta
\end{bmatrix}
\end{aligned}
\end{equation}
with the domain
\begin{equation} \label{D(A)_m}
\begin{aligned}
D(\mathcal{A})
=\,  \left\{(\psi, v, w, \eta)^T \in \mathcal{H}^{m-1} \left\vert\rule{0cm}{1cm}\right. \begin{matrix}
w \in H^{m}(\R^n), \\[2mm]
\dfrac{c^2_g}{\tau} \Delta \psi+\dfrac{b}{\tau} \Delta v+\dfrac{1}{\tau}\displaystyle \int_0^\infty g(s)\Delta \eta(s)  \in H^{m-1}(\R^n), \\[4mm]
\eta \in D(\mathbb{T}) \end{matrix} \right\}.
\end{aligned}
\end{equation}
The linear operator $\mathbb{T}$ above is given by
\begin{equation} \label{def_T_eta}
\mathbb{T}\eta=-\eta_s,
\end{equation}
and has the domain
\begin{equation}
D(\mathbb{T})=\{\eta\in%
\mathcal{M}^m\, \big| \ \eta_s\in \mathcal{M}^m,\ \eta(s=0)=0\}.
\end{equation}
\noindent We can then formally see $\Psi$ as the solution to
\begin{equation} \label{abstract_evol_eq}
\begin{aligned}
\begin{cases}
\dfrac{\textup{d}}{\dt}\Psi(t)= \mathcal{A} \Psi(t)+\mathbb{F}(\Psi, \nabla \Psi),\quad t>0, \vspace{0.2cm}\\[1mm] 
\Psi(0)=\Psi_0,
\end{cases} 
\end{aligned}
\end{equation}
with the nonlinear term given by
\begin{equation} \label{def_F}
\begin{aligned}
\mathbb{F}(\Psi, \nabla \Psi)= \frac{1}{\tau} \, [
0,\
0 ,\
2kv w+ 2\nabla \psi \cdot \nabla v ,\
0 
]^T.
\end{aligned}
\end{equation}
In the spirit of~\cite{Racke_Said_2019}, our plan is to prove local solvability of \eqref{abstract_evol_eq} by introducing the mapping 
\begin{equation}
\begin{aligned}\label{Mapping_T}
\mathcal{T}(\Phi)= e^{t\mathcal{A}}\Psi_0+\int_0^te^{(t-r)\mathcal{A}}\mathbb{F}(\Phi, \nabla \Phi)(r)\, \textup{d}r
\end{aligned}
\end{equation}
on a suitable ball in a Banach space. If we can employ the Banach fixed-point on $\mathcal{T}$,  the unique fixed-point of this mapping is the mild solution of \eqref{abstract_evol_eq}. \\
\indent To begin with, we prove that $\mathcal{A}$ generates a linear $\textup{C}_0$-semigroup. The proof follows by adapting the arguments from \cite{Bounadja_Said_2019} based on the Lumer-Phillips theorem. To this end we first introduce an equivalent scalar product and norm on the space $\mathcal{H}^{m-1}$ adapted to fit our particular problem. \\
\indent For any vectors $\Psi=(\psi, v, w, \eta)$ and $\bar{\Psi}=(\bar{\psi}, \bar{v}, \bar{w}, \bar{\eta})$ in $\mathcal{H}^{m-1}$, we define the scalar product
\begin{equation} 
\begin{aligned}
&(\Psi, \bar{\Psi})_{\mathcal{H}^{m-1}}\\[1mm]
=& \,\begin{multlined}[t] \sum_{\kappa=0}^{m-1}\left \{ \vphantom{\int_{\mathbb{R}^{n}}} c^2_g
(\nabla^{\kappa +1}(\psi+\tau v), \nabla^{\kappa +1}(\bar{\psi}+\tau \bar{v}))_{L^2}+\tau(b -\tau
c^2_g )(\nabla^{\kappa+1} v, \nabla^{\kappa+1} \bar{v})_{L^2}  \right.\\[1mm] 
+\tau(b -\tau
c^2_g )(\nabla^{\kappa} v, \nabla^{\kappa} \bar{v})_{L^2}+(\nabla^{\kappa}(v+\tau w), \nabla^{\kappa}(\bar{v}+\tau \bar{w}))_{L^2}\\[2mm]+\tau (\nabla^{\kappa+1}
\eta, \nabla^{\kappa+1}
\bar{\eta})_{L^2, -g'}+ (\nabla^{\kappa+1} \eta, \nabla^{\kappa+1} \bar{\eta})_{L^2, g} %+\tau (\nabla^\kappa w, \nabla^\kappa \bar{w})_{L^2}
\\[1mm] \left.
+ \tau \int_{\mathbb{R}^{n}} \left(
(\nabla^{\kappa+1} \eta, \nabla^{\kappa+1} \bar{v})_{L^2,g}+(\nabla^{\kappa+1} \bar{\eta}, \nabla^{\kappa+1} v)_{L^2,g}
 \vphantom{\int_{\mathbb{R}^{n}}} \right)\dx \right \}. \end{multlined}
\end{aligned}
\end{equation}
The corresponding norm is given by
\begin{equation} 
\begin{aligned}
|||\Psi|||_{\mathcal{H}^{m-1}}^2
=& \,\begin{multlined}[t] \sum_{\kappa=0}^{m-1} \left\{ \vphantom{\int_{\mathbb{R}^{n}}} c^2_g
\|\nabla^{\kappa +1}(\psi+\tau v)\|_{L^2}^{2}+\tau(b -\tau
c^2_g )\|\nabla^{\kappa+1} v\|^{2}_{L^2} \right. \\[1mm] 
+\tau(b -\tau
c^2_g )\|\nabla^{\kappa} v\|^{2}_{L^2}+\|\nabla^{\kappa}(v+\tau w)\|^{2}_{L^2}+\tau \Vert \nabla^{\kappa+1}
\eta \Vert _{L^2, -g'}^{2}
\\[1mm] \left. + \|\nabla^{\kappa+1} \eta\|^2_{L^2, g}
+ 2\tau \int_{\mathbb{R}^{n}}
(\nabla^{\kappa+1} \eta, \nabla^{\kappa+1} v)_{L^2, g}
 \dx  \right\}. \end{multlined} 
\end{aligned}
\end{equation}
We now prove that this problem-specific norm is equivalent to the standard norm \eqref{norm_Hm-1} on $\mathcal{H}^{m-1}$.
\begin{proposition} \label{Prop:Equiv}	Let $b \geq \tau c^2 > \tau c^2_g$ and $m \geq 1$. Assume that $n \geq 3$. There exist positive constants $C_1$ and $C_2$ such that
	\begin{equation} \label{equiv_norms}
		C_1\|\Psi\|_{\mathcal{H}^{m-1}} \leq 	|||\Psi|||_{\mathcal{H}^{m-1}} \leq 	C_2\|\Psi\|_{\mathcal{H}^{m-1}}
	\end{equation}
for all $\Psi \in \mathcal{H}^{m-1}$.	
\end{proposition}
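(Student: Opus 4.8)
The plan is to establish the two-sided bound \eqref{equiv_norms} by comparing the problem-specific norm $|||\cdot|||_{\mathcal{H}^{m-1}}$ term by term with the standard norm \eqref{norm_Hm-1}, using the hypotheses $b \geq \tau c^2 > \tau c^2_g$ and $c^2_g > 0$ (which follows from (G2)) to control signs, and the boundedness $\int_0^\infty g \, \mathrm{d}s < c^2$ together with the decay condition (G3) to compare the $g$-weighted and $(-g')$-weighted history norms. Since both expressions are sums over $\kappa = 0, \ldots, m-1$ of analogous quantities, it suffices to treat a single value of $\kappa$; the full bound then follows by summing.

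First I would handle the upper bound $|||\Psi|||_{\mathcal{H}^{m-1}} \leq C_2 \|\Psi\|_{\mathcal{H}^{m-1}}$. Expanding $\|\nabla^{\kappa+1}(\psi + \tau v)\|_{L^2}^2 \leq 2\|\nabla^{\kappa+1}\psi\|_{L^2}^2 + 2\tau^2 \|\nabla^{\kappa+1}v\|_{L^2}^2$ and similarly for $\|\nabla^\kappa(v + \tau w)\|_{L^2}^2$, each resulting term is dominated by a term in $\|\Psi\|_{\mathcal{H}^{m-1}}^2 = \|\nabla\psi\|_{H^{m-1}}^2 + \|v\|_{H^m}^2 + \|w\|_{H^{m-1}}^2 + \|\nabla\eta\|_{H^{m-1},-g'}^2$, noting that $\|v\|_{H^m}^2$ absorbs both the $\nabla^{\kappa+1}v$ and $\nabla^\kappa v$ contributions. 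For the history terms, $\tau\|\nabla^{\kappa+1}\eta\|_{L^2,-g'}^2$ is directly part of the standard norm, while $\|\nabla^{\kappa+1}\eta\|_{L^2,g}^2 = \int_0^\infty g(s)\|\nabla^{\kappa+1}\eta(s)\|_{L^2}^2 \, \mathrm{d}s$ must be bounded by $\|\nabla^{\kappa+1}\eta\|_{L^2,-g'}^2$; this uses (G3), which gives $g(s) \leq -\frac{1}{\zeta} g'(s)$ pointwise, hence $\|\eta\|_{L^2,g}^2 \leq \frac{1}{\zeta}\|\eta\|_{L^2,-g'}^2$. The cross term $2\tau \int_{\mathbb{R}^n} (\nabla^{\kappa+1}\eta, \nabla^{\kappa+1}v)_{L^2,g} \, \mathrm{d}x$ is controlled by Cauchy--Schwarz in the $g$-weighted inner product followed by Young's inequality: it is bounded by $\tau\|\nabla^{\kappa+1}\eta\|_{L^2,g}^2 + \tau \|g\|_{L^1}\|\nabla^{\kappa+1}v\|_{L^2}^2$, both already-controlled quantities.

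For the lower bound $C_1\|\Psi\|_{\mathcal{H}^{m-1}} \leq |||\Psi|||_{\mathcal{H}^{m-1}}$, the subtlety is that $|||\cdot|||$ contains the combinations $\psi + \tau v$ and $v + \tau w$ rather than $\psi, w$ individually. I would recover $\|\nabla^{\kappa+1}\psi\|_{L^2}$ from $\|\nabla^{\kappa+1}(\psi+\tau v)\|_{L^2}$ and $\|\nabla^{\kappa+1}v\|_{L^2}$ via the reverse triangle inequality $\|\nabla^{\kappa+1}\psi\|_{L^2} \leq \|\nabla^{\kappa+1}(\psi+\tau v)\|_{L^2} + \tau\|\nabla^{\kappa+1}v\|_{L^2}$, then square and use that $c^2_g > 0$ and $b - \tau c^2_g > 0$ so that the first two groups of terms in $|||\cdot|||^2$ dominate $\|\nabla^{\kappa+1}\psi\|_{L^2}^2 + \|\nabla^{\kappa+1}v\|_{L^2}^2$ up to a positive constant; the same device applied to $v + \tau w$ and $v$ recovers $\|\nabla^\kappa w\|_{L^2}^2$, using the $\tau(b - \tau c^2_g)\|\nabla^\kappa v\|_{L^2}^2$ term to absorb the $v$-part. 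For the history variable, the main obstacle is that the cross term $2\tau\int_{\mathbb{R}^n}(\nabla^{\kappa+1}\eta, \nabla^{\kappa+1}v)_{L^2,g}\,\mathrm{d}x$ has no definite sign and could in principle cancel the positive $\|\nabla^{\kappa+1}\eta\|_{L^2,g}^2$ term. I would absorb it instead into the $v$-terms: estimating it from below by $-\epsilon\|\nabla^{\kappa+1}v\|_{L^2}^2 - \frac{\tau^2}{\epsilon}\|g\|_{L^1}\|\nabla^{\kappa+1}\eta\|_{L^2,g}^2$ for suitable small $\epsilon$, the $v$-loss is compensated by $\tau(b-\tau c^2_g)\|\nabla^{\kappa+1}v\|_{L^2}^2$ and the $\eta$-loss (which still leaves a positive multiple of $\|\nabla^{\kappa+1}\eta\|_{L^2,g}^2$, or can be taken as a small perturbation) while the clean term $\tau\|\nabla^{\kappa+1}\eta\|_{L^2,-g'}^2$ directly controls the $(-g')$-weighted part of $\|\Psi\|_{\mathcal{H}^{m-1}}$. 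Choosing $\epsilon$ small enough depending only on the fixed constants $\tau, b, c^2_g, \zeta, \|g\|_{L^1}$ yields a positive lower constant $C_1$. Finally, summing over $\kappa$ from $0$ to $m-1$ and noting $\|\nabla\eta\|_{H^{m-1},-g'}^2 = \sum_{\kappa=0}^{m-1}\|\nabla^{\kappa+1}\eta\|_{L^2,-g'}^2$ gives \eqref{equiv_norms}.
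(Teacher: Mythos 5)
Your overall strategy is the same as the paper's. The paper omits the upper bound entirely (deferring to the reference it adapts) and concentrates on the lower bound, which it proves exactly as you propose: a weighted Young inequality on the indefinite cross term $2\tau\int_{\R^n}(\nabla^{\kappa+1}\eta,\nabla^{\kappa+1}v)_{L^2,g}\,\dx$, absorption of the losses into the $v$- and $\eta$-terms, and the triangle inequality to recover $\nabla^{\kappa+1}\psi$ and $\nabla^{\kappa}w$ from the combinations $\psi+\tau v$ and $v+\tau w$.

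The one step that does not go through as written is your choice of the Young parameter in the lower bound. You bound the cross term below by $-\epsilon\|\nabla^{\kappa+1}v\|^2_{L^2}-\tfrac{\tau^2}{\epsilon}\|g\|_{L^1}\|\nabla^{\kappa+1}\eta\|^2_{L^2,g}$ and then say to choose $\epsilon$ ``small enough.'' But as $\epsilon\to 0$ the coefficient $\tau^2\|g\|_{L^1}/\epsilon$ of the $g$-weighted loss blows up, while the only resources available to absorb it are the single unit of $\|\nabla^{\kappa+1}\eta\|^2_{L^2,g}$ present in $|||\cdot|||^2$ plus, via (G3) ($-g'\ge\zeta g$), a fraction of $\tau\|\nabla^{\kappa+1}\eta\|^2_{L^2,-g'}$. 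Hence $\epsilon$ is constrained from \emph{below} as well as from above (the $v$-absorption needs $\epsilon<\tau(b-\tau c^2_g)$), and one must verify the window is nonempty — which is exactly where the hypothesis $b\ge\tau c^2$ enters, through $b-\tau c^2_g\ge\tau(c^2-c^2_g)=\tau\|g\|_{L^1}$. Your parenthetical fallback, that the $\eta$-loss ``still leaves a positive multiple of $\|\nabla^{\kappa+1}\eta\|^2_{L^2,g}$,'' would require $\epsilon>\tau^2\|g\|_{L^1}$, which combined with $\epsilon<\tau(b-\tau c^2_g)$ fails in the critical case $b=\tau c^2$ allowed by the statement. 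The paper avoids this tension by applying Young with the split $2ab\le\frac{a^2}{\varepsilon+1}+(\varepsilon+1)b^2$: exactly one unit of $\|\nabla^{\kappa+1}\eta\|^2_{L^2,g}$ is sacrificed (cancelling the corresponding term of the norm), the excess $\varepsilon\int_0^\infty g\|\nabla^{\kappa+1}\eta\|^2_{L^2}\ds\le\frac{\varepsilon}{\zeta}\|\nabla^{\kappa+1}\eta\|^2_{L^2,-g'}$ is absorbed into $\tau\|\nabla^{\kappa+1}\eta\|^2_{L^2,-g'}$ under the single condition $\varepsilon<\tau\zeta$, and the $v$-condition $\tau(b-\tau c^2_g)-\tau^2(c^2-c^2_g)/(\varepsilon+1)>0$ reduces to $\varepsilon\tau(b-\tau c^2_g)+\tau(b-\tau c^2)>0$, which holds automatically for any $\varepsilon>0$. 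Your argument is repairable along these lines, but the quantifier ``small enough'' must be replaced by an explicit two-sided choice of $\epsilon$ and a check that such a choice exists.
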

\begin{proof}
The proof follows analogously to the proof of~\cite[Lemma 3.1]{dell2016moore}, so we only prove the more involved left-hand side inequality here.\\
\indent For each $\kappa=0, 1,\dots, m-1$, we have by Young's inequality  
\begin{equation}
\begin{aligned}
&\left|2\tau\int_{\R^n}\int_{0}^\infty g(s)\nabla^{\kappa+1} \eta(s) \cdot \nabla^{\kappa+1} v \ds\dx \right| \\
\leq& \, \frac{\tau^{2}(c^2-c^2_g)}{\varepsilon+1}\Vert\nabla^{\kappa+1} v\Vert^{2}_{L^2}+(\varepsilon+1)\dint_{0}^\infty g(s)\Vert\nabla^{\kappa+1}\eta(s)\Vert^{2}_{L^2}\ds,
\end{aligned}
\end{equation}
for all $\varepsilon>0$. We recall assumption (G3) on the relaxation kernel $g$ to derive
\begin{equation}
\begin{aligned}
&2\tau\int_{\R^n}\int_{0}^\infty g(s)\nabla^{\kappa+1} \eta(s) \cdot \nabla^{\kappa+1} v \ds\dx \\
\geq& \, -\frac{\tau^{2}(c^2-c^2_g)}{\varepsilon+1}\Vert\nabla^{\kappa+1} v\Vert^{2}_{L^2}- \|\nabla^{\kappa+1} \eta\|^2_{L^2, g} 
-\frac{\varepsilon}{\zeta}\dint_{0}^\infty(- g'(s))\Vert\nabla^{\kappa+1}\eta(s)\Vert^{2}_{L^2}\ds.
\end{aligned}
\end{equation}
We know that $c^2 > c^2_g$ and we can take $\varepsilon< \tau \zeta$ to obtain
\begin{equation} \label{left-hand side}  
\begin{aligned}
&\begin{multlined}[t]  c^2_g
\|\nabla^{\kappa +1}(\psi+\tau v)\|_{L^2}^{2}+\tau(b -\tau
c^2_g )\|\nabla^{\kappa+1} v\|^{2}_{L^2} +\tau(b -\tau
c^2_g )\|\nabla^{\kappa} v\|^{2}_{L^2}
\\[1mm]+\|\nabla^{\kappa}(v+\tau w)\|^{2}_{L^2}  +\tau \Vert \nabla^{\kappa+1}
\eta \Vert _{L^2, -g'}^{2}+ \|\nabla^{\kappa+1} \eta\|^2_{L^2, g} \\[1mm]
+ 2\tau \int_{\mathbb{R}^{n}}
(\nabla^{\kappa+1} \eta, \nabla^{\kappa+1} v)_{L^2, g}
\dx   
\end{multlined}\\
\geq&\, \begin{multlined}[t] c^2_g\Vert\nabla^{\kappa+1}(\psi +\tau v)\Vert^{2}_{L^2}  +\left[\tau (b-\tau c^2_g)-\tau^2(c^2-c^2_g)/(\varepsilon+1) \right] \Vert \nabla^{\kappa+1} v\Vert^{2}_{L^2} \\[1mm]
+\Vert \nabla ^{\kappa}(v+\tau w)\Vert^{2}_{L^2}+(\tau-\varepsilon/\zeta)\Vert \nabla^{\kappa+1}\eta\Vert^{2}_{L^2, -g'}. 
\end{multlined}
\end{aligned}
\end{equation}
Note that the condition 
\begin{equation}
\tau (b-\tau c^2_g)-\frac{\tau^2(c^2-c^2_g)}{\varepsilon+1}>0
\end{equation}
is equivalent to $\varepsilon \tau (b-\tau c^2_g)+\tau (b- \tau c^2)>0$, which holds true under the assumptions of the theorem. It remains to observe that
\begin{equation} \label{aux_ineq}
\begin{aligned}
&\begin{multlined}[t]c^2_g\Vert\nabla^{\kappa+1}(\psi +\tau v)\Vert^{2}_{L^2}+\tau (b-\tau c^2_g)(\Vert\nabla^{\kappa+1} v\Vert^{2}_{L^2}+\|\nabla^\kappa v\|_{L^2})\\ +\Vert \nabla ^{\kappa}(v+\tau w)\Vert^{2}_{L^2}
\end{multlined}
\\
\geq&\, \tilde{C}(\Vert\nabla^{\kappa+1}\psi \Vert^{2}_{L^2}+\Vert\nabla^{\kappa+1}v \Vert^{2}_{L^2} +\Vert \nabla ^{\kappa}v\Vert^{2}_{L^2}+\Vert \nabla ^{\kappa}w\Vert^{2}_{L^2}),
\end{aligned}
\end{equation}
for some positive constant $\tilde{C}$ that depends on $\tau$, $c^2_g$, and $b$. The left-hand side inequality in \eqref{equiv_norms} then follows by summing \eqref{left-hand side}  over $\kappa=0, 1,\dots, m-2$ and using estimate \eqref{aux_ineq}. 
\end{proof}	
\begin{theorem}
	Let $b \geq \tau c^2 > \tau c^2_g$ and $m \geq 1$. Assume that $n \geq 3$. Then the linear operator $\mathcal{A}$
	is the infinitesimal generator of a linear $\textup{C}_0$-semigroup 
	\begin{equation}
	S(t) = e^{t\mathcal{A}}: \, \mathcal{H}^{m-1} \rightarrow \mathcal{H}^{m-1}.
	\end{equation}
\end{theorem}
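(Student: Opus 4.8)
The plan is to apply the Lumer--Phillips theorem, for each fixed $m\ge 1$. By Proposition~\ref{Prop:Equiv} the problem-adapted inner product $(\cdot,\cdot)_{\mathcal{H}^{m-1}}$ induces a norm equivalent to the standard one on $\mathcal{H}^{m-1}$, so it suffices to show that $\mathcal{A}$ is densely defined and dissipative on $(\mathcal{H}^{m-1},|||\cdot|||_{\mathcal{H}^{m-1}})$ and that $\lambda I-\mathcal{A}$ is surjective for some $\lambda>0$; a $\textup{C}_0$-semigroup of contractions with respect to $|||\cdot|||_{\mathcal{H}^{m-1}}$ is then automatically a $\textup{C}_0$-semigroup on $(\mathcal{H}^{m-1},\|\cdot\|_{\mathcal{H}^{m-1}})$. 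Density of $D(\mathcal{A})$ in $\mathcal{H}^{m-1}$ follows from standard truncation and mollification: one approximates $(\psi,v,w,\eta)$ by elements with $w\in H^{m}(\R^n)$, with the Laplacian combination $\tfrac{c^2_g}{\tau}\Delta\psi+\tfrac{b}{\tau}\Delta v+\tfrac1\tau\int_0^\infty g(s)\Delta\eta(s)\,\mathrm{d}s$ in $H^{m-1}(\R^n)$, and with $\eta$ smooth, compactly supported in $s$, and vanishing near $s=0$; I would only indicate this.

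\emph{Step 1 (dissipativity).} For $\Psi=(\psi,v,w,\eta)^{T}\in D(\mathcal{A})$ I would evaluate $(\mathcal{A}\Psi,\Psi)_{\mathcal{H}^{m-1}}$ summand by summand over $\kappa=0,\dots,m-1$, inserting the components of $\mathcal{A}\Psi$ and integrating by parts in $x$. The blocks built from the $\psi$-, $v$-, and $w$-components are designed to combine into a nonpositive quantity, the indefinite Laplacian contributions cancelling against the cross terms $(\nabla^{\kappa+1}\eta,\nabla^{\kappa+1}v)_{L^2,g}$ and $(\nabla^{\kappa+1}(\psi+\tau v),\,\cdot\,)$ hard-wired into the inner product --- exactly as in the Lyapunov-functional computations of \cite{Bounadja_Said_2019,dell2016moore}. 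For the memory block, acting with $\mathbb{T}\eta=-\eta_s$ on the weighted terms $\|\nabla^{\kappa+1}\eta\|^2_{L^2,-g'}$ and $\|\nabla^{\kappa+1}\eta\|^2_{L^2,g}$ and integrating by parts in $s$, the boundary terms at $s=0$ vanish because $\eta(s=0)=0$ for $\eta\in D(\mathbb{T})$, and assumptions (G3), i.e.\ $g'\le-\zeta g$, and (G4), i.e.\ $g''\ge 0$, make the remaining $s$-integrals carry the right sign, producing a dissipative contribution bounded above by a negative multiple of $\|\nabla^{\kappa+1}\eta\|^2_{L^2,-g'}+\|\nabla^{\kappa+1}\eta\|^2_{L^2,g}$. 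Summing over $\kappa$ gives $\mathrm{Re}\,(\mathcal{A}\Psi,\Psi)_{\mathcal{H}^{m-1}}\le 0$.

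\emph{Step 2 (range condition).} Fix $\lambda>0$ and solve $(\lambda I-\mathcal{A})\Psi=F$ with $F=(f_1,f_2,f_3,f_4)\in\mathcal{H}^{m-1}$. The first two equations give $v=\lambda\psi-f_1$ and $w=\lambda^2\psi-\lambda f_1-f_2$, while the fourth equation $\eta_s+\lambda\eta=v+f_4$ with $\eta(s=0)=0$ is a linear ODE in $s$ solved by $\eta(s)=\tfrac{1-e^{-\lambda s}}{\lambda}\,v+\int_0^s e^{-\lambda(s-\sigma)}f_4(\sigma)\,\mathrm{d}\sigma$. Substituting into the third equation and collecting terms, $\psi$ solves an elliptic equation $a_\lambda\psi-\beta_\lambda\Delta\psi=G$ in $\R^n$ with $a_\lambda=\lambda^2(\tau\lambda+1)>0$, $\beta_\lambda=c^2_g+b\lambda+\int_0^\infty g(s)(1-e^{-\lambda s})\,\mathrm{d}s>0$, and $G$ assembled from $f_1,f_2,f_3$, $\Delta f_1$, and a bounded $s$-average of $\Delta f_4$. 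This is uniquely solvable via the Fourier transform, the solution gaining two spatial derivatives relative to $G$; reconstructing $v,w,\eta$ and exploiting the precise form of the elliptic operator (which produces the cancellations needed to place $v,w$ into $H^{m}(\R^n)$), one checks that the resulting $(\psi,v,w,\eta)$ lies in $D(\mathcal{A})$ --- in particular the Laplacian combination is in $H^{m-1}(\R^n)$ and $\eta\in D(\mathbb{T})$.

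\emph{Main obstacle.} The delicate point is the verification in Step~2 that the reconstructed history $\eta$ belongs to $\mathcal{M}^m$ together with $\eta_s\in\mathcal{M}^m$: since $\eta$ contains a convolution in $s$ against $f_4$, bounding $\|\nabla^j\eta\|_{L^2,-g'}$ --- and, through $\eta_s=v+f_4-\lambda\eta$, also $\|\nabla^j\eta_s\|_{L^2,-g'}$ --- calls for a Young-type inequality adapted to the memory-weighted space, used with the integrability built into the kernel assumptions (G1)--(G3), namely $g\in W^{1,1}(\R^+)$, $-g'\in L^1(\R^+)$ and finiteness of $\int_0^\infty g(s)\,\mathrm{d}s$; the term $\tfrac{1-e^{-\lambda s}}{\lambda}\,v$ is harmless since that factor is bounded in $s$ and $\nabla^j v\in L^2$. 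In Step~1 the corresponding subtlety is the bookkeeping of the cross terms $(\nabla^{\kappa+1}\eta,\nabla^{\kappa+1}v)_{L^2,g}$, whose contribution mixes the $v$- and $\eta$-equations and must again be absorbed using (G3). Everything else reduces to routine integration by parts together with Cauchy--Schwarz and Young estimates.
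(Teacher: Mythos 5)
Your overall strategy (Lumer--Phillips with the problem-adapted inner product of Proposition~\ref{Prop:Equiv}) is the right one, but Step~1 contains a genuine gap: the operator $\mathcal{A}$ is \emph{not} dissipative with respect to $(\cdot,\cdot)_{\mathcal{H}^{m-1}}$ as you claim. When you expand $(\mathcal{A}\Psi,\Psi)_{\mathcal{H}^{m-1}}$, the lower-order block $\tau(b-\tau c_g^2)(\nabla^{\kappa}v,\nabla^{\kappa}\bar v)_{L^2}$ of the inner product produces the cross term $\tau(b-\tau c_g^2)(\nabla^{\kappa}w,\nabla^{\kappa}v)_{L^2}$, which is indefinite and is not absorbed by any of the cancellations you invoke: the resulting dissipation identity contains no negative multiple of $\|\nabla^{\kappa}w\|_{L^2}^2$ that could control it. The paper circumvents this by passing to the bounded perturbation $\mathcal{A}_{B}=\mathcal{A}+B$ with $B\Psi=(0,0,-\tfrac{1}{\tau}(b-\tau c_g^2)v,0)^T$; the contribution of $B$ through the $(v+\tau w)$-block is $-(b-\tau c_g^2)\|\nabla^\kappa v\|^2_{L^2}-\tau(b-\tau c_g^2)(\nabla^\kappa v,\nabla^\kappa w)_{L^2}$, which exactly cancels the offending cross term and leaves a good sign. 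One then proves generation for $\mathcal{A}_B$ and recovers $\mathcal{A}=\mathcal{A}_B-B$ by the bounded-perturbation theorem. Your proof never introduces this device, so dissipativity as you state it does not hold.

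There is a second, more technical problem in Step~2. You eliminate $v$ and $w$ in favor of $\psi$ and solve $a_\lambda\psi-\beta_\lambda\Delta\psi=G$. But $\psi$ lives only in the homogeneous-type space $\{\psi:\nabla\psi,\dots,\nabla^{m}\psi\in L^2\}$, and your source $G$ contains $f_1$ and $f_2$ \emph{undifferentiated}; since the first component $f_1$ of $F\in\mathcal{H}^{m-1}$ is likewise only controlled through its gradient, $G$ need not belong to $L^2(\R^n)$, and the Fourier-transform solvability you invoke does not apply. The paper avoids this by eliminating $\psi=v+f$ and $w=v-g$ instead and deriving the elliptic equation \eqref{eq_v} for $v\in H^{m}(\R^n)$, a full (inhomogeneous) Sobolev space where Lax--Milgram applies directly; the only second-order data entering the source are $\Delta f$ and $\Delta p$, which are controlled by the $\mathcal{H}^{m-1}$-norm of $F$. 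Your identification of the $\eta\in\mathcal{M}^m$ issue and its resolution via a weighted Young/Cauchy--Schwarz estimate using (G3) is correct and matches the paper's treatment.
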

\begin{proof}
The proof is based on the Lumer-Phillips theorem applied to a bounded perturbation of the operator $\mathcal{A}$; see~\cite[Chapter 1.4]{pazy2012semigroups}. To this end, we introduce the operator $B$ by
\begin{equation}
\begin{aligned}
B\begin{bmatrix}
\psi \\[1mm]
v \\[1mm]
w \\[3mm]
\eta 
\end{bmatrix} = \begin{bmatrix}
0 \\[1mm]
0 \\[1mm]
-\frac{1}{\tau}(b-\tau c_g^2)v \\[3mm]
0
\end{bmatrix}
\end{aligned}
\end{equation}
and set $\mathcal{A}_{\mathcal{B}}=\mathcal{A}+B$; see~\cite[Theorem 2.4]{Bounadja_Said_2019} for a similar approach. Since $B$ is a bounded linear operator on $\mathcal{H}^{m-1}$, if $\mathcal{A}_{B}$ generates a $C_0$ semigroup on $\mathcal{H}^{m-1}$, then so does $\mathcal{A}=\mathcal{A}_{\mathcal{B}}-B$; cf.~\cite[Theorem 1.1]{pazy2012semigroups}. \\
\indent We first wish to prove that $\mathcal{A}_{B}$ is dissipative:
\[(\mathcal{A}_{B}\Psi, \Psi)_{\mathcal{H}^{m-1}} \leq 0 \quad \text{for all} \ \Psi \in D(\mathcal{A}).\]
For a given $\Psi \in D(\mathcal{A})$, it is straightforward to check that the following identity holds:
\begin{equation}
\begin{aligned}
(\mathcal{A}_{\mathcal{B}}\Psi, \Psi)_{\mathcal{H}^{m-1}}=&\, \sum_{\kappa=0}^{m-1}\begin{multlined}[t]\left(\vphantom{\int_0^\infty}
-(b-\tau c^2)\|\nabla^{\kappa+1} v\|^2_{L^2}-\frac{1}{\tau}(b-\tau c^2_g)\|\nabla^{\kappa} v\|^2_{L^2} \right.\\
+\int_0^\infty(g-\tau g')\nabla^{\kappa+1} \eta(s) \cdot \nabla^{\kappa+1} \mathbb{T}\eta(s)\ds \\
+\int_0^\infty g(s) \nabla^{\kappa+1} \mathbb{T}\eta(s)\cdot \nabla^{\kappa+1} v \ds \\ \left.
-\int_0^\infty g'(s)\nabla^{\kappa+1} \eta(s)\cdot \nabla^{\kappa+1} v\ds \right).
 \end{multlined}
\end{aligned}
\end{equation}
We can simplify this expression by noting that
\[\int_0^\infty g(s) \nabla^{\kappa+1} \mathbb{T}\eta(s)\cdot \nabla^{\kappa+1} v \ds = \int_0^\infty g'(s)\nabla^{\kappa+1} \eta(s)\cdot \nabla^{\kappa+1} v\ds.\]
Therefore, we have
\begin{equation}
\begin{aligned}
(\mathcal{A}_{B}\Psi, \Psi)_{\mathcal{H}^{m-1}}=&\, \sum_{\kappa=0}^{m-1}\begin{multlined}[t]\left(\vphantom{\int_0^\infty}
-(b-\tau c^2)\|\nabla^{\kappa+1} v\|^2_{L^2}-(b-\tau c^2_g)\|\nabla^{\kappa} v\|^2_{L^2}\right.\\ \left.
+\int_0^\infty(g'-\tau g'')\|\nabla^{\kappa+1} \eta(s)\|_{L^2}^2\ds \right) \leq 0,
\end{multlined}
\end{aligned}
\end{equation}
where the last inequality holds thanks to the assumptions on the memory kernel. To be able to employ the Lumer-Phillips theorem, it remains to prove that
\begin{equation}
\textup{Ran}(\textup{I}-\mathcal{A}_{B})=\mathcal{H}^{m-1}.
\end{equation}
In other words, for a given $F=(f, g, h, p) \in \mathcal{H}^{m-1}$, we have to prove existence of a unique solution $\Psi=(\psi, v, w, \eta) \in D(\mathcal{A})$ to the equation
\begin{equation}
\Psi- \mathcal{A}_{B}\Psi=F.
\end{equation}
We can write this equation component-wise as the following system:
\begin{equation}  \label{aux_System}
\begin{cases}
\psi=v+f, \\ 
v=w+g, \\ 
 w=- \frac{1}{\tau}w+\frac{1}{\tau}c^2_g\Delta \psi+\frac{1}{\tau}b\Delta v + \frac{1}{\tau}\displaystyle%
\int_{0}^{\infty}g(s)\Delta\eta(s)\ds-\tfrac{1}{\tau}(b-\tau c^2_g)v+h,
\\ 
\eta=v-\eta_{s}+p.%
\end{cases}%
\end{equation}
Solving the last equation and using $\eta(s=0)=0$ yields
\begin{equation} \label{eq_eta}
\eta(s)=(1-e^{-s})v+\int_0^s e^{-(s-y)}p(y)\, \textup{d}y.
\end{equation}
Combining the equations in the system results in an elliptic problem for $v$:
\begin{equation}  \label{eq_v}
- \nu \Delta v +\sigma v =q,
\end{equation}
where the coefficients and the source term are given by
\begin{equation}
\begin{aligned}
&\nu = b+c^2_g+\int_0^\infty g(s)(1-e^{-s})\ds>0,\\
& \sigma = 1+\tau + (b-\tau c^2_g), \\
& q= c^2_g \Delta f+(1+ \tau)g+\tau h +\int_0^\infty g(s)\int_0^s e^{-(s-y)}\Delta p(y)\,\textup{d}y \textup{d}s.
\end{aligned}
\end{equation}
To prove that the elliptic equation \eqref{eq_v} admits a unique solution $v \in H^{m}(\R^n)$, we note that 
\begin{equation}
\|q\|_{H^{m-2}} \lesssim \begin{multlined}[t]\|\Delta f\|_{H^{m-2}}+\|g\|_{H^{m-2}}+\|h\|_{H^{m-2}}\\
+\int_0^\infty g(s)\int_0^s e^{-(s-y)}\|\Delta p(y) \|_{H^{m-2}}\,\textup{d}y\ds. \end{multlined}
\end{equation}
We can further estimate the last term on the right as follows:
\begin{equation} \label{est_g}
\begin{aligned}
&\int_0^\infty g(s)\int_0^s e^{-(s-y)}\|\Delta p(y) \|_{H^{m-2}}\,\textup{d}y\ds\\
\leq& \, \sqrt{c^2-c^2_g} \left(\int_0^\infty g(s)\left(\int_0^s e^{-(s-y)}\|\Delta p(y) \|_{H^{m-2}}\,\textup{d}y\right)^2\ds \right)^{1/2}\\
\leq& \sqrt{\frac{c^2-c^2_g}{\zeta}} \left(\int_0^\infty -g'(s) \|\Delta p(y) \|^2_{H^{m-2}}\ds \right)^{1/2}.\,
\end{aligned}
\end{equation}
The claim then follows by the Lax-Milgram theorem. From \eqref{aux_System}, we further have $\psi=v+f \in \{\psi: \nabla \psi,\ldots, \nabla^{m} \psi \in L^2(\R^n)\}$ and $w=v-g \in H^{m}(\R^n)$. On account of equation \eqref{eq_v}, it follows that
\begin{equation}
\begin{aligned}
\|\nabla \eta\|_{H^{m-1}, -g'}\lesssim&\, \|\nabla v\|_{\mathcal{H}^{m-1}, -g'}+\|\nabla p\|_{H^{m-1}, -g} \\
\lesssim&\,  \|\nabla v\|_{\mathcal{H}^{m-1}}+\|\nabla p\|_{H^{m-1}, -g}.
\end{aligned}
\end{equation}
Therefore also $\mathbb{T}\eta = \eta-v-p \in \mathcal{M}^m$. Moreover, from \eqref{aux_System}, we have $\eta(0)=0$. Finally,
\begin{equation}
\begin{aligned}
& \frac{1}{\tau}c^2_g\Delta \psi+\frac{1}{\tau}b\Delta v + \frac{1}{\tau}\displaystyle%
\int_{0}^{\infty}g(s)\Delta\eta(s)\ds\\
=&\,  w+ \frac{1}{\tau}w+\frac{1}{\tau}(b-\tau c^2_g)v-h \in H^{m-1}(\R^n).
\end{aligned}
\end{equation}
This completes the proof.
\end{proof} 
%%%%%%%%%%%%%%%%%%%%%%%%%%%%%%%%%%%%%%%%%%%%%%
\section{Short-time existence for the JMGT equation} \label{Sec:LocalExistence}
We claim that a unique solution to our nonlinear problem exists for sufficiently short final time. Since there is a quadratic gradient nonlinearity in the JMGT equation, we will need a bound on $\Vert \nabla\psi\Vert_{L^\infty}$ and $\Vert \nabla v \Vert_{L^\infty}$ in the upcoming estimates. We intend to employ the Sobolev embedding
\[\nabla \psi(t), \nabla v(t) \in H^r(\R^n) \hookrightarrow L^{\infty}(\R^n), \quad r>n/2. \] 
This leads to the assumption $m >n/2+1$ on the order of Hilbert space $\mathcal{H}^{m-1}$. 
\begin{theorem} \label{Thm:LocalExistence}
	Let $b\geq \tau c^2> \tau c^2_g$ and $n \geq 3$. Assume that $\Psi_0 \in \mathcal{H}^{m-1}$ for an integer $m>n/2+1$. 
	Then there exists a final time \[T=T(\|\Psi_0\|_{\mathcal{H}^{m-1}})\]
	such that problem   \eqref{Main_System},  \eqref{Main_System_IC} admits a unique mild solution 
	\begin{equation}
	\Psi=(\psi, v ,w, \eta)^T \in C([0,T]; \mathcal{H}^{m-1}),
	\end{equation} 
given by
\begin{equation} \label{IP_Psi0}
\Psi= e^{t\mathcal{A}}\Psi_0+\int_0^te^{(t-r)\mathcal{A}}\mathbb{F}(\Psi, \nabla \Psi)(r)\, \textup{d}r,
\end{equation}
where the functional $\mathbb{F}$ is defined in \eqref{def_F}. 
\end{theorem}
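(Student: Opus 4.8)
The plan is to prove local existence via the Banach fixed-point theorem applied to the mapping $\mathcal{T}$ defined in \eqref{Mapping_T}, carried out on a closed ball in the space $X_T = C([0,T]; \mathcal{H}^{m-1})$. First I would fix $R > 0$ (say $R = 2\|\Psi_0\|_{\mathcal{H}^{m-1}}$, or some comparable quantity involving the operator norm of the semigroup) and consider the closed ball $\mathcal{B}_{R,T} = \{\Phi \in X_T : \|\Phi\|_{X_T} \le R\}$. Since $\mathcal{A}$ generates a $\textup{C}_0$-semigroup on $\mathcal{H}^{m-1}$ by the preceding theorem, there is a constant $M \ge 1$ (and, because of exponential decay of the kernel and the subcritical/critical structure, one can in fact take the growth bound to be $\le 0$, so $\|e^{t\mathcal{A}}\|_{\mathcal{L}(\mathcal{H}^{m-1})} \le M$ uniformly in $t \ge 0$) controlling the semigroup. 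The two things to verify are: (i) $\mathcal{T}$ maps $\mathcal{B}_{R,T}$ into itself for $T$ small enough, and (ii) $\mathcal{T}$ is a contraction on $\mathcal{B}_{R,T}$ for $T$ small enough.

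The heart of both estimates is a bound on the nonlinearity $\mathbb{F}(\Phi, \nabla \Phi) = \frac{1}{\tau}[0, 0, 2kvw + 2\nabla\psi\cdot\nabla v, 0]^T$ in $\mathcal{H}^{m-1}$, which amounts to bounding $\|2kvw + 2\nabla\psi\cdot\nabla v\|_{H^{m-1}}$. For this I would invoke the tame product estimate \eqref{First_inequaliy_Guass} from Lemma \ref{Guass_symbol_lemma}: one writes, for $\kappa \le m-1$,
\begin{equation*}
\|\nabla^\kappa(vw)\|_{L^2} \lesssim \|v\|_{L^\infty}\|\nabla^\kappa w\|_{L^2} + \|w\|_{L^\infty}\|\nabla^\kappa v\|_{L^2},
\end{equation*}
and similarly for $\nabla\psi\cdot\nabla v$. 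The assumption $m > n/2 + 1$ is exactly what guarantees $H^{m-1} \hookrightarrow L^\infty$ and, more importantly, $\nabla\psi, \nabla v \in H^{m-1} \hookrightarrow L^\infty$, so that $\|v\|_{L^\infty}, \|w\|_{L^\infty}, \|\nabla\psi\|_{L^\infty}, \|\nabla v\|_{L^\infty} \lesssim \|\Phi\|_{\mathcal{H}^{m-1}}$; note the $\mathcal{H}^{m-1}$ norm controls $\|\nabla\psi\|_{H^{m-1}}$, $\|v\|_{H^m}$, and $\|w\|_{H^{m-1}}$, which is just enough regularity for every factor appearing above. This yields the quadratic bound $\|\mathbb{F}(\Phi,\nabla\Phi)\|_{\mathcal{H}^{m-1}} \lesssim \|\Phi\|_{\mathcal{H}^{m-1}}^2 \lesssim R^2$ on the ball, and, via the bilinearity (after subtracting and using $\Phi_1\Phi_1 - \Phi_2\Phi_2 = \Phi_1(\Phi_1-\Phi_2) + (\Phi_1-\Phi_2)\Phi_2$), the Lipschitz estimate $\|\mathbb{F}(\Phi_1,\nabla\Phi_1) - \mathbb{F}(\Phi_2,\nabla\Phi_2)\|_{\mathcal{H}^{m-1}} \lesssim (R_1 + R_2)\|\Phi_1 - \Phi_2\|_{\mathcal{H}^{m-1}} \lesssim R\,\|\Phi_1 - \Phi_2\|_{\mathcal{H}^{m-1}}$.

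With these in hand, the Duhamel term is controlled by $\|\int_0^t e^{(t-r)\mathcal{A}}\mathbb{F}(\Phi,\nabla\Phi)(r)\,\textup{d}r\|_{\mathcal{H}^{m-1}} \le M T \cdot C R^2$, so $\|\mathcal{T}(\Phi)\|_{X_T} \le M\|\Psi_0\|_{\mathcal{H}^{m-1}} + MTCR^2 \le R$ provided $T$ is chosen small depending on $R$ (equivalently on $\|\Psi_0\|_{\mathcal{H}^{m-1}}$); and likewise $\|\mathcal{T}(\Phi_1) - \mathcal{T}(\Phi_2)\|_{X_T} \le M T C R \|\Phi_1 - \Phi_2\|_{X_T} \le \tfrac12 \|\Phi_1 - \Phi_2\|_{X_T}$ for $T$ small. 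Here one must also check continuity in time, i.e. that $\mathcal{T}(\Phi) \in C([0,T];\mathcal{H}^{m-1})$: the linear part $e^{t\mathcal{A}}\Psi_0$ is continuous by strong continuity of the semigroup, and the Duhamel integral is continuous because $r \mapsto \mathbb{F}(\Phi,\nabla\Phi)(r)$ is continuous (composition of a continuous curve in $\mathcal{H}^{m-1}$ with a locally Lipschitz map) and bounded on $[0,T]$, so standard estimates on strongly continuous semigroups applied to the integral give continuity. Banach's fixed-point theorem then produces the unique fixed point $\Psi \in \mathcal{B}_{R,T}$, which by construction is the mild solution \eqref{IP_Psi0}, and uniqueness in the whole space $C([0,T];\mathcal{H}^{m-1})$ follows by a standard Gronwall argument on the difference of two solutions (shrinking $T$ is not needed for uniqueness once one argues on $[0,T']$ for small $T'$ and then bootstraps). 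The main obstacle is bookkeeping in the nonlinear estimate — making sure the highest-derivative factor in each term of $vw$ and $\nabla\psi\cdot\nabla v$ is matched against the $L^2$-based component of the $\mathcal{H}^{m-1}$ norm that actually controls that many derivatives (this is where the asymmetry $v \in H^m$ but $w, \nabla\psi \in H^{m-1}$ in the definition of $\mathcal{H}^{m-1}$ matters), and using $m > n/2+1$ rather than merely $m > n/2$ so that the \emph{gradient} $\nabla\psi$, not just $\psi$, lands in $L^\infty$.
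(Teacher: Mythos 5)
Your proposal is correct and follows essentially the same route as the paper: a Banach fixed-point argument for the Duhamel map $\mathcal{T}$ on a ball in $C([0,T];\mathcal{H}^{m-1})$, with the quadratic bound $\|2kvw+2\nabla\psi\cdot\nabla v\|_{H^{m-1}}\lesssim\|\Phi\|^2_{\mathcal{H}^{m-1}}$ (via $m-1>n/2$ and the product estimate), smallness of $T$ for self-mapping and contraction, and uniqueness in the full space by Gronwall. The extra remarks on time-continuity of the Duhamel integral and on matching the highest-order factor against the $H^m$ component of $v$ are sensible refinements of the same argument.
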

\begin{proof}
We prove the statement by employing the Banach fixed-point theorem; cf.~\cite{Racke_Said_2019, KaltenbacherNikolic,lasiecka2017global, kaltenbacher2012well} and \cite[Theorem 2.5.4]{zheng2004nonlinear}.  To this end, we define the ball
	\begin{equation} \label{B_L}
\begin{aligned}
\mathcal{B}_{L}=\{\Phi \in  C([0,T]; \mathcal{H}^{m-1}):    \,  \|\Phi(t)\|_{\mathcal{H}^{m-1}} \leq L, \ \forall t \in [0,T],   \ \Phi(0)=\Psi_0 \, \},
\end{aligned}
\end{equation}
equipped with the norm
\begin{equation}
\|\Phi\|_{\mathcal{B}_L}=\sup_{0\leq t \leq T} \|\Phi(t)\|_{\mathcal{H}^{m-1}}.
\end{equation}
The radius $L \geq \|\Psi_0\|_{ \mathcal{H}^{m-1}}$ of the ball will be conveniently chosen as large enough below. We note that $\mathcal{B}_L$ is a closed convex subset of $C([0,T]; \mathcal{H}^{m-1})$. \\
\indent For a given $\Phi=(\psi^\phi, v^\phi, w^\phi, \eta^\phi)^T$ in $\mathcal{B}_L$, we define the mapping $\mathcal{T}: \Phi \mapsto \Psi$ by
\begin{equation} \label{IP_Psi0}
\Psi= e^{t\mathcal{A}}\Psi_0+\int_0^te^{(t-r)\mathcal{A}}\mathbb{F}(\Phi, \nabla \Phi)(r)\, \textup{d}r.
\end{equation}
\indent We claim that $\mathcal{T}$ is a self-mapping and that it is strictly contractive.  We begin by proving that $\mathcal{T}(\mathcal{B}_L) \subset \mathcal{B}_L$. Let $\Phi \in \mathcal{B}_L$. Since $m-1>n/2$, we have
\begin{equation}
\begin{aligned}
\|f(t)\|_{H^{m-1}}=&\,\|2k v^\phi(t)w^\phi(t)+2\nabla \psi^\phi \cdot \nabla v^\phi\|_{H^{m-1}}  \\ 
\lesssim& \,\begin{multlined}[t] \|v^\phi(t)\|_{H^{m-1}}\|w^\phi(t)\|_{H^{m-1}}
+\|\nabla \psi^\phi(t)\|_{H^{m-1}}\|\nabla v^\phi(t)\|_{H^{m-1}}.
\end{multlined}
\end{aligned}
\end{equation}
An application of Young's inequality immediately yields
\begin{equation} \label{ineq_F}
\|f(t)\|_{H^{m-1}} \lesssim \|\Phi(t)\|^2_{\mathcal{H}^{m-1}} \lesssim L^2, \quad t \in [0,T].
\end{equation}
We are now ready to estimate $\Psi(t)$:
\begin{equation}
\begin{aligned}
\|\Psi(t)\|_{\mathcal{H}^{m-1}}\leq&\, \|e^{t\mathcal{A}}\Psi_0\|_{\mathcal{H}^{m-1}}+\int_0^t \|e^{(t-r)\mathcal{A}}\mathbb{F}(\Phi, \nabla \Phi)\|_{\mathcal{H}^{m-1}} \, \textup{d}r \\
\leq&\, \|\Psi_0\|_{\mathcal{H}^{m-1}}+\int_0^t \|\mathbb{F}(\Phi, \nabla \Phi)(r)\|_{\mathcal{H}^{m-1}} \, \textup{d}r \\
\leq&\, \|\Psi_0\|_{\mathcal{H}^{m-1}}+\int_0^t \|f(r)\|_{H^{m-1}} \, \textup{d}r \\
\lesssim&\,  \|\Psi_0\|_{\mathcal{H}^{m-1}}+T L^2,
\end{aligned}
\end{equation}
where we have employed inequality \eqref{ineq_F} in the last line. Therefore, there exists a positive constant $C_\star$ such that
\begin{equation} \label{smalness_nl_eq_Rn}
\begin{aligned}
\|\Psi\|_{\mathcal{B}_L} \leq& \, C_\star ( \|\Psi_0\|_{\mathcal{H}^{m-1}}+TL^2).
\end{aligned}
\end{equation}
The final time $T$ can then be chosen small enough and the radius $L$ of the ball $\mathcal{B}_L$ large enough so that $\Psi \in \mathcal{B}_L$. Indeed, we first fix $L$ so that 
\begin{eqnarray*}
C_\star \|\Psi_0\|_{\mathcal{H}^{m-1}} \leq \frac{L}{2}. 
\end{eqnarray*}
With $L$ fixed, we choose the time horizon $T>0$ small enough so that 
\begin{equation} \label{smallness_T}
T\leq \frac{1}{2C_\star L}.
\end{equation}
This choice of the radius $L$ and final time $T$ yields
\[\|\Psi\|_{\mathcal{B}_L} \leq L,\]
and, therefore,  $\mathcal{T}(\Phi) \in \mathcal{B}_L$.\\
\indent To prove contractivity, take any $\Phi=(\psi^{\phi}, v^{\phi}, w^{\phi}, \eta^{\phi})^T$ and $\Phi^\star=(\psi^{\phi}_\star, v^{\phi}_\star, w^{\phi}_\star, \eta^{\phi}_\star)^T$ in $\mathcal{B}_L$. We know that
\begin{equation}
\begin{aligned}
\|\mathcal{T}(\Phi)-\mathcal{T}(\Phi^\star)\|_{\mathcal{H}^{m-1}}\leq&\,  \int_0^t \|e^{(t-r)\mathcal{A}}\left[\mathbb{F}(\Phi, \nabla \Phi)-\mathbb{F}(\Phi^\star, \nabla \Phi^\star)\right]\|_{\mathcal{H}^{m-1}} \, \textup{d}r \\
\lesssim &\,\int_0^t \|v^{\phi} w^\phi-v^\phi_{\star} w_{\star}^\phi+\nabla \psi^{\phi} \cdot \nabla v^\phi-\nabla \psi^\phi_{\star} \cdot \nabla v_{\star}^\phi\|_{H^{m-1}} \, \textup{d}r.
\end{aligned}
\end{equation}
We can further estimate the last term as follows:
\begin{equation}
\begin{aligned}
&\|v^{\phi} w^\phi-v^\phi_{\star} w_{\star}^\phi+\nabla \psi^{\phi} \cdot \nabla v^\phi-\nabla \psi^\phi_{\star} \cdot \nabla v_{\star}^\phi\|_{H^{m-1}}\\
\lesssim&\, \begin{multlined}[t]\|v^\phi-v^\phi_\star\|_{H^{m-1}} \|w^\phi\|_{H^{m-1}}+\|v^\phi_\star\|_{H^{m-1}} \|w^\phi-w^\phi_\star\|_{H^{m-1}}\\[1mm]+\|\nabla (\psi^\phi -\psi^\phi_\star)\|_{H^{m-1}}\| \nabla v^\phi\|_{H^{m-1}} +\|\nabla \psi^\phi_{\star}\|_{H^{m-1}}\| \nabla (v^\phi-v^\phi_{\star})\|_{H^{m-1}},
\end{multlined}
\end{aligned}
\end{equation}
recalling that $m-1 >n/2$. Therefore, we have
\begin{equation}
\begin{aligned}
\|\mathcal{T}(\Phi)-\mathcal{T}(\Phi^\star)\|_{\mathcal{B}_L} \lesssim&  \,  \int_0^t \left(\|\Phi\|_{\mathcal{H}^{m-1}}+\|\Phi^\star\|_{\mathcal{H}^{m-1}} \right) \|\Phi -\Phi^\star\|_{\mathcal{H}^{m-1}} \,\textup{d}r\\[1mm]
\lesssim&\, T L \|\Phi -\Phi^\star\|_{\mathcal{B}_L}.
\end{aligned}
\end{equation}
By reducing the final time $T$, we can then guarantee that the mapping $\mathcal{T}$ is strictly contractive. On account of Banach's fixed-point theorem, a unique solution $\Psi=\Phi \in \mathcal{B}_L$ of the problem exists. Uniqueness in $C([0,T]; \mathcal{H}^{m-1})$ follows by assuming that $\Psi,  \Psi^\star \in C([0,T]; \mathcal{H}^{m-1})$ solve the problem and then applying Gronwall's inequality to
\begin{equation}
\begin{aligned}
\|\Psi-\Psi^\star\|_{\mathcal{H}^{m-1}}\lesssim&\,  \int_0^t \left(\|\Psi\|_{\mathcal{H}^{m-1}}+\|\Psi^\star\|_{\mathcal{H}^{m-1}} \right) \|\Psi -\Psi^\star\|_{\mathcal{H}^{m-1}} \,\textup{d}r.
\end{aligned}
\end{equation}
This finishes the proof of Theorem \ref{Thm:LocalExistence}.
\end{proof}%
%%%%%%%%%%%%%%%%%%%%%%%%%%%%%%%%%%%%%%%%%%%%%%
\section{Energy estimates in the subcritical case} \label{Sec:EnergyEstimates}
\indent To prove global solvability, we intend to derive an energy estimate for the solution of the nonlinear problem that is uniform in time. Compared to~\cite{nikolic2020mathematical}, where local nonlinear effects in propagation were neglected, equation \eqref{Main_Equation} has a quadratic gradient nonlinearity. This means we have to involve higher-order energies in the estimates. Furthermore, we wish to derive a bound valid for all $n \geq 3$.\\ 
\indent To justify the upcoming estimates, we briefly consider again the regularity of $w$. According to Theorem~\ref{Thm:LocalExistence}, the nonlinear problem has the solution
\begin{equation}
(\psi, v ,w, \eta)^T \in C([0,T]; \mathcal{H}^{m-1})
\end{equation}
for sufficiently short final time. Looking at the third equation in the system, we know that
\begin{equation}
\begin{aligned}
\tau \|w_t\|_{H^{m-2}} \leq&\, \begin{multlined}[t] \|w\|_{H^{m-2}}+c^2_g \|\Delta \psi\|_{H^{m-2}}+b\|\Delta v\|_{H^{m-2}} \\+ 
\int_{0}^{\infty}g(s)\|\Delta\eta(s)\|_{H^{m-2}}\ds+2\|k vw+\nabla \psi \cdot \nabla v\|_{H^{m-2}}.
 \end{multlined}
\end{aligned}
\end{equation}
Similarly to \eqref{est_g}, we can further estimate the $\eta$ terms as follows:
\begin{equation}
\begin{aligned}
\int_{0}^{\infty}g(s)\|\Delta\eta(s)\|_{H^{m-2}}\ds \leq&\, \sqrt{c^2-c^2_g}\left(\int_{0}^{\infty}g(s)\|\Delta\eta(s)\|^2_{H^{m-2}}\ds \right)^{1/2}\\
\leq&\,  \sqrt{\frac{c^2-c^2_g}{\zeta}}\left(\int_{0}^{\infty}-g'(s)\|\Delta\eta(s)\|^2_{H^{m-2}}\ds \right)^{1/2}.
\end{aligned}
\end{equation}
Therefore, we have $w \in C^1([0,T]; H^{m-2}(\R^n)) \cap C([0,T]; H^{m-1}(\R^n))$ .\\
\indent  To derive higher-order bounds, we need to work with the space-differentiated system. For simplicity, we introduce the shorthand tilde notation
\begin{equation}
\tilde{\zeta}=\nabla^\kappa \zeta \quad \text{for}\quad \zeta \in{\{\psi, v,w,\eta \}}, \quad \kappa \geq 1.   
\end{equation}
 We then apply the operator $\nabla^\kappa $ ($\kappa\geq 1$) to the system \eqref{Main_System} to obtain
\begin{equation}  \label{Main_System_kappa}
\begin{cases}
\tilde{\psi}_{t}=\tilde{v}, \\ 
\tilde{v}_{t}=\tilde{w}, \\ 
\tau \tilde{w}_{t}=- \tilde{w}+c^2_g\Delta \tilde{\psi}+b\Delta \tilde{v} + \displaystyle%
\int_{0}^{\infty}g(s)\Delta\tilde{\eta}(s)\ds+
F^{(\kappa)}(\psi,v,\nabla \psi,\nabla v) 
,
\\ 
\tilde{\eta}_{t}=\tilde{v}-\tilde{\eta}_{s},%
\end{cases}%
\end{equation}
with the right-hand side nonlinearity given by 
\begin{equation}\label{F_k_Form}
F^{(\kappa)}(\psi,v,\nabla \psi,\nabla v)=2k[\nabla^\kappa,v]w+2k v\tilde{w}+2\kappa[\nabla^\kappa,\nabla \psi] \cdot \nabla v+2k\nabla \psi \cdot \nabla\tilde{v},
\end{equation}
recalling how the commutator is defined in \eqref{commutator}.\\
\indent We also introduce two new energy norms at this point. Let $n \geq 3$. For a given integer $p\geq 1$, we define 
\begin{equation} \label{EnergyNorm}
\begin{aligned}
\|\Psi\|^2_{\mathcal{E}_p(t)}
= \sup_{0\leq \sigma \leq t} \, \sum_{\kappa=0}^p \,  \mathcal{E}^{(\kappa)}[\Psi](\sigma), 
\end{aligned}
\end{equation}
where
\begin{equation} \label{E_kappa}
\begin{aligned}
\mathcal{E}^{(\kappa)}[\Psi] =  \begin{multlined}[t]\Vert\nabla^{\kappa+1}(\psi +\tau v)\Vert^{2}_{L^2} + \Vert \nabla^\kappa( v+\tau w)\Vert^{2}_{L^2}+\Vert \nabla^{\kappa+1} v\Vert^{2}_{L^2}
\\+\Vert \nabla^{\kappa+1}\eta\Vert^{2}_{L^2, -g'}
+\Vert \nabla^\kappa w(\sigma )\Vert _{L^{2}}^{2}\\
+\Vert\Delta\nabla^{\kappa}(\psi +\tau v)\Vert^{2}_{L^2} + \Vert\nabla^{\kappa+1}(v+\tau w)\Vert^{2}_{L^2}\\+\Vert \Delta \nabla^{\kappa} v\Vert^{2}_{L^2}
+\Vert \Delta\nabla^{\kappa}\eta\Vert^{2}_{L^2, -g'}.
\end{multlined}
\end{aligned}
\end{equation}
With the choice $p=m-2$ this norm is clearly equivalent to $\displaystyle \sup_{0\leq \sigma \leq t} \|\Psi(\sigma)\|_{\mathcal{H}^{m-1}}$. The corresponding dissipation norm is given by
\begin{equation} \label{DissipativeEnergyNorm}
\begin{aligned}
\|\Psi\|^2_{\mathcal{D}_p(t)}
=&\,\int_{0}^{t} \sum_{\kappa=0}^p \mathcal{D}^{(\kappa)}[\Psi] \, \textup{d} \sigma  \vphantom{\int_{0}^{t}},
\end{aligned}
\end{equation}
where we have set
\begin{equation} \label{D_kappa}
\begin{aligned}
\mathcal{D}^{(\kappa)}[\Psi] =&\, \begin{multlined}[t]\Vert \nabla^{\kappa+1}
v(\sigma )\Vert _{L^2}^{2}+\Vert \nabla^{\kappa+1} \eta (\sigma )\Vert _{L^2, -g'}^{2}  +\Vert\Delta\nabla^{\kappa}(\psi +\tau v)\Vert^{2}_{L^2}\\ + \Vert\nabla^{\kappa+1}(v+\tau w)\Vert^{2}_{L^2}+\Vert \Delta \nabla^{\kappa} v\Vert^{2}_{L^2}
+\Vert \Delta\nabla^{\kappa}\eta\Vert^{2}_{L^2, -g'} \\
+\Vert \nabla^\kappa w(\sigma )\Vert
_{L^{2}}^{2} . \end{multlined}
\end{aligned}
\end{equation}
\vspace*{1mm}
\indent Our overall goal in the remainder of this section is to prove that the energy $\|\Psi\|_{\mathcal{E}_{m-2}(t)}$ together with the related quantity  $\|\Psi\|_{\mathcal{D}_{m-2}(t)}$ are uniformly bounded for all time if the energy at initial time is sufficiently small. Following~\cite{Racke_Said_2019,nikolic2020mathematical}, we can achieve this by first proving that
\begin{equation}  \label{Estimate_Main_preliminary}
\|\Psi\|^2_{\mathcal{E}_{m-2}(t)}+\|\Psi\|^2_{\mathcal{D}_{m-2}(t)}\lesssim \|\Psi_0\|^2_{\mathcal{E}_{m-2}(0)}+\|\Psi\|_{\mathcal{E}_{m-2}(t)}\|\Psi\|^2_{\mathcal{D}_{m-2}(t)}.
\end{equation}
If $\|\Psi_0\|_{\mathcal{E}_{m-2}(0)}$ is small enough, we can then employ Lemma~\ref{Lemma_Stauss} to arrive at our claim. We prove this inequality in two parts: the first is dedicated to estimating the linear and the second to the right-hand side nonlinear terms in the equation. To achieve uniform stability, we assume that we are in the non-critical regime here, where $b>\tau c^2$.\\
\indent To simplify the notation involving the nonlinear terms, we introduce the right-hand side functionals $\mathcal{F}^{(\kappa)}_0$ and $\mathcal{F}^{(\kappa)}_1$ as
\begin{equation} \label{Def_Rhs}
\begin{aligned}
\mathcal{F}^{(\kappa)}_0(\varphi)= (F^{(\kappa)}, \varphi)_{L^2}, \quad \mathcal{F}^{(\kappa)}_1(\varphi)= (\nabla F^{(\kappa)}, \nabla \varphi)_{L^2},
\end{aligned}
\end{equation}
where  $\kappa \in \{0, 1, \ldots, m-2\}$, and $\varphi$ stands for different test functions that will be used below. \\
\indent We set aside estimates of the nonlinear terms for a moment and focus on estimating the linear terms in the equation.
\begin{proposition} \label{Prop:EnergyEst_LinearParts}
Let $b> \tau c^2>\tau c^2_g$. Then for each $\kappa \in \{0, 1, \ldots, m-2\}$, it holds 
	\begin{equation} \label{energy_est_kappa}
	\begin{aligned}
	&  \sup_{0\leq \sigma \leq t}  \mathcal{E}^{(\kappa)}[\Psi](\sigma)+  \int_0^t \mathcal{D}^{(\kappa)}[\Psi](\sigma) \, \textup{d}\sigma\\
	\lesssim&\, \begin{multlined}[t]  \mathcal{E}^{(\kappa)}[\Psi_0]
	+ \int_0^t \left\{ |\mathcal{F}_0^{(\kappa)}(\nabla^{\kappa}(v+\tau w))|+|\mathcal{F}_1^{(\kappa)}(\nabla^\kappa(v+\tau w))|\right.\\ \left.+|\mathcal{F}_0^{(\kappa)}(\nabla^\kappa w)|
	+|\mathcal{F}_1^{(\kappa)}(\nabla^{\kappa}(\psi+\tau v))| +|\mathcal{F}_1^{(\kappa)}(\tau \nabla^\kappa v)| \right\} \, \textup{d} \sigma. \end{multlined}
	\end{aligned}
	\end{equation}
\end{proposition}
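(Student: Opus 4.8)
The plan is to run the classical Lyapunov-functional (energy multiplier) argument for the linear Moore--Gibson--Thompson system with Dafermos history, now performed at each spatial differentiation order $\kappa\in\{0,\dots,m-2\}$ on the system \eqref{Main_System_kappa}. The five test functions appearing on the right-hand side of \eqref{energy_est_kappa} are precisely the multipliers I would use: two are paired with the third equation of \eqref{Main_System_kappa} directly, namely $\nabla^\kappa(v+\tau w)$ and $\nabla^\kappa w$, while the other three are paired with the once-further-differentiated third equation, namely $\nabla^{\kappa+1}(v+\tau w)$, $\nabla^{\kappa+1}(\psi+\tau v)$, and $\tau\nabla^{\kappa+1}v$. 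Testing in $L^2(\R^n)$, using the kinematic identities $\tilde{\psi}_t=\tilde{v}$, $\tilde{v}_t=\tilde{w}$ and the transport equation $\tilde{\eta}_t=\tilde{v}-\tilde{\eta}_s$, and integrating by parts in $x$, each pairing yields a differential identity whose left-hand side is $\tfrac{\textup{d}}{\textup{d}\sigma}$ of a quadratic functional plus sign-definite dissipative contributions, and whose right-hand side is one of the quantities $|\mathcal{F}_0^{(\kappa)}(\cdot)|$, $|\mathcal{F}_1^{(\kappa)}(\cdot)|$ listed in \eqref{energy_est_kappa}. I would then combine these into a weighted sum $\mathcal{L}^{(\kappa)}(\sigma)$ --- with small coefficients on the last four identities --- prove that $\mathcal{L}^{(\kappa)}$ is equivalent, uniformly in $\sigma\in[0,t]$, to $\mathcal{E}^{(\kappa)}[\Psi](\sigma)$, and show that $\tfrac{\textup{d}}{\textup{d}\sigma}\mathcal{L}^{(\kappa)}+c\,\mathcal{D}^{(\kappa)}[\Psi]\leq C\mathcal{R}^{(\kappa)}$ for some $c>0$, with $\mathcal{R}^{(\kappa)}$ abbreviating the sum of the five nonlinear terms on the right of \eqref{energy_est_kappa}. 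Integrating over $(0,\sigma)$ for every $\sigma\le t$, using $\mathcal{L}^{(\kappa)}(0)\lesssim\mathcal{E}^{(\kappa)}[\Psi_0]$, and taking the supremum over $\sigma$ then delivers \eqref{energy_est_kappa}.

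The memory integrals are dealt with by the Dafermos device from Sections~\ref{Sec:Preliminaries}--\ref{Sec:FunctionalSetting}: after an integration by parts in $x$, a term $\int_0^\infty g(s)\Delta\tilde{\eta}(s)\ds$ paired with a $\tilde{v}$-type multiplier becomes $-\int_0^\infty g(s)\nabla\tilde{\eta}(s)\cdot\nabla\tilde{v}\ds$; writing $\tilde{v}=\tilde{\eta}_t+\tilde{\eta}_s$ and integrating by parts in $s$, with boundary terms vanishing because $\tilde{\eta}(s{=}0)=0$ and $g$ decays, produces $\tfrac12\tfrac{\textup{d}}{\textup{d}\sigma}\|\nabla^{\kappa+1}\eta\|^2_{L^2,g}$ plus a nonnegative term proportional to $\|\nabla^{\kappa+1}\eta\|^2_{L^2,-g'}$; the leftover $\tilde{\eta}$-against-$\tilde{w}$ contributions --- which the cross term $2\tau(\nabla^{\kappa+1}\eta,\nabla^{\kappa+1}v)_{L^2,g}$ in the equivalent inner product of Section~\ref{Sec:FunctionalSetting} is designed to absorb --- are, after another integration by parts in $s$ and use of (G3)--(G4), absorbed either into the $(-g')$-dissipation or, by Young's inequality with a small weight, into the $\|\nabla^{\kappa+1}v\|^2_{L^2}$-dissipation. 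Hypothesis (G3), $g'\le-\zeta g$, is exactly what lets us dominate every $g$-weighted energy contribution in $\mathcal{L}^{(\kappa)}$ by the $(-g')$-weighted quantities that are present in $\mathcal{E}^{(\kappa)}$ and $\mathcal{D}^{(\kappa)}$, while (G1)--(G2) give the integrability and (G4) the sign of the $\eta$-dissipation --- exactly as in the dissipativity computation for $\mathcal{A}_{\mathcal{B}}$. Strictness in $b>\tau c^2$ (that is, $\delta>0$) is used crucially: it makes the coefficient $(b-\tau c^2)$ of $\|\nabla^{\kappa+1}v\|^2_{L^2}$ in the basic identity strictly positive; and $\tau c^2>\tau c^2_g$, i.e.\ $\int_0^\infty g>0$, together with (G3) makes the $\|\nabla^{\kappa+1}\eta\|^2_{L^2,-g'}$-dissipation genuinely present.

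As for which term comes from which multiplier: the $\nabla^\kappa(v+\tau w)$ identity is the first-order energy identity underlying the dissipativity of $\mathcal{A}_{\mathcal{B}}$, now carrying the source $\mathcal{F}_0^{(\kappa)}(\nabla^\kappa(v+\tau w))$; it reproduces, up to equivalence via the per-$\kappa$ estimate in the proof of Proposition~\ref{Prop:Equiv}, the first three terms of $\mathcal{E}^{(\kappa)}$ and the $\eta$-energy, with dissipation in $\nabla^{\kappa+1}v$, $\nabla^\kappa v$ and $\nabla^{\kappa+1}\eta$. The $\nabla^\kappa w$ identity supplies the $\|\nabla^\kappa w\|^2_{L^2}$-dissipation at the cost of time derivatives of cross terms such as $(\nabla^{\kappa+1}\psi,\nabla^{\kappa+1}v)_{L^2}$, which are bounded by $\mathcal{E}^{(\kappa)}[\Psi]$ and so absorbed into $\mathcal{L}^{(\kappa)}$. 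The three gradient-level identities rebuild the second-order block --- the terms of $\mathcal{E}^{(\kappa)}$ and $\mathcal{D}^{(\kappa)}$ carrying a Laplacian or $\nabla^{\kappa+1}(v+\tau w)$: $\nabla^{\kappa+1}(v+\tau w)$ yields the $\|\Delta\nabla^\kappa v\|^2$- and $\|\Delta\nabla^\kappa\eta\|^2_{-g'}$-dissipation, $\nabla^{\kappa+1}(\psi+\tau v)$ the $\|\Delta\nabla^\kappa(\psi+\tau v)\|^2$-dissipation, and $\tau\nabla^{\kappa+1}v$ the first-order pattern one differentiation order higher. Any genuinely uncontrolled high-order terms that these generate --- typically $\|\nabla^{\kappa+1}w\|^2_{L^2}$ --- must either cancel among the identities (they do, because the third equation of \eqref{Main_System_kappa} reads $\partial_\sigma(\tilde{v}+\tau\tilde{w})=c^2_g\Delta\tilde{\psi}+b\Delta\tilde{v}+\int_0^\infty g(s)\Delta\tilde{\eta}(s)\ds+F^{(\kappa)}$, tying $\nabla^{\kappa+1}(v+\tau w)$ to the others), or be recovered through an integration by parts in time using the $\Delta\nabla^\kappa(\psi+\tau v)$- and $\Delta\nabla^\kappa v$-dissipation.

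I expect the main obstacle to be exactly this bookkeeping of the gradient-level identities together with the ordered choice of weights: one has to fix the weight of the first-order identity first, then that of the $\nabla^\kappa w$ identity, then the gradient-level ones in a nested-smallness hierarchy, verifying that (i) $\mathcal{L}^{(\kappa)}$ stays equivalent to $\mathcal{E}^{(\kappa)}[\Psi]$, so that all cross terms (each bounded by $\mathcal{E}^{(\kappa)}[\Psi]$) are absorbed without spoiling positivity, and (ii) the combined dissipation dominates $\mathcal{D}^{(\kappa)}[\Psi]$ up to a multiple of $\mathcal{R}^{(\kappa)}$. The remaining steps --- integrating the differential inequality, summing over $\kappa$, and invoking the equivalence with $\|\cdot\|_{\mathcal{H}^{m-1}}$ --- are routine; the nonlinear estimates for $\mathcal{R}^{(\kappa)}$ are deliberately left abstract here and carried out separately.
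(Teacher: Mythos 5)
Your proposal follows essentially the same route as the paper: the five multipliers you identify correspond exactly to the paper's energies $E_1^{(\kappa)}$, $E_2^{(\kappa)}$, the cross-functionals $F_1^{(\kappa)}$, $F_2^{(\kappa)}$, and the $\nabla^\kappa w$ identity, which are then combined into the Lyapunov functional \eqref{Lyapunov_k} with exactly the nested choice of weights ($L_1$ large, $\varepsilon$ small) and the Dafermos treatment of the memory terms that you describe. The argument is correct and matches the paper's outline (which itself defers the detailed computations to~\cite{nikolic2020mathematical}).
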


\begin{proof}
The proof follows by performing the energy analysis of~\cite{nikolic2020mathematical} on the space-differentiated system \eqref{Main_System_kappa}, so we provide only an outline of the arguments here and refer to~\cite{nikolic2020mathematical} for details. Similarly to our previous reasoning, we first need to introduce energies that are tailored to our particular problem. The first one is given by
	\begin{equation} \label{energy}
	\begin{aligned}
	E_1^{(\kappa)}(t)
	=& \,\begin{multlined}[t] \dfrac{1}{2}\left [\vphantom{\int_{0}^{%
			\infty}} c^2_g%
	\|\nabla^{\kappa +1}(\psi+\tau v)\|_{L^2}^{2}+\tau(b -\tau
	c^2_g )\|\nabla^{\kappa+1} v\|^{2}_{L^2}\right.   \\
	\left. +\|\nabla^{\kappa}(v+\tau w)\|^{2}_{L^2}+\tau \Vert \nabla^{\kappa+1}
	\eta \Vert _{L^2, -g'}^{2}+ \|\nabla^{\kappa+1} \eta\|^2_{L^2, g}
	\right.\\
	\left.+ 2\tau \int_{\mathbb{R}^{n}}
	\int_{0}^{\infty}g(s) \nabla^{\kappa+1} \eta (s) \cdot \nabla^{\kappa+1} v
	\ds \dx\right] \end{multlined}
	\end{aligned}
	\end{equation}
	at time $t \geq 0$. Applying the energy estimates in~\cite[Proposition 4.1]{nikolic2020mathematical} to the system satisfied by $(\nabla^\kappa \psi, \nabla^\kappa v, \nabla^\kappa w, \nabla^\kappa \eta)$ yields the following dissipativity relation:
	\begin{equation} \label{Energy1_dissipation}
	\frac{\textup{d}}{\dt}E_1^{(\kappa)}(t)+( b -\tau c^2 )\Vert \nabla^{\kappa+1} v(t)\Vert_{L^2}^{2}+\frac{%
		1}{2}\Vert \nabla^{\kappa+1} \eta\Vert _{L^2, -g'}^{2}\leq \,
	|\mathcal{F}_0^{(\kappa)}(\nabla^\kappa (v + \tau  w))| 
	\end{equation}
	for all $t\geq 0$. Equipped with the arguments presented in the proof of Proposition~\ref{Prop:Equiv}, it is straightforward to check that this problem-specific energy is equivalent to 
	\begin{equation}\label{E_1_Eqv}
	\begin{aligned}
	\mathscr{E}^{(\kappa)}_1[\Psi]
	=&\, \begin{multlined}[t]\Vert\nabla^{\kappa+1}(\psi +\tau v)\Vert^{2}_{L^2} + \Vert \nabla^\kappa( v+\tau w)\Vert^{2}_{L^2}+\Vert \nabla^{\kappa+1} v\Vert^{2}_{L^2}
	+\Vert \nabla^{\kappa+1}\eta\Vert^{2}_{L^2, -g'}. 
	\end{multlined}
	\end{aligned}
	\end{equation} 
Secondly, we introduce the energy of order $\kappa+2$ by
	\begin{equation} \label{E_2}
	\begin{aligned}
	E_{2}^{(\kappa)}(t)
	=&\, \begin{multlined}[t]\frac{1}{2}\left[\vphantom{\int_{0}^{%
			\infty}} c^2_g\left\Vert \Delta
	( \nabla^{\kappa} (\psi+\tau v))\right\Vert _{L^{2}}^{2}+\tau(b -\tau c^2_g )\left\Vert
	\Delta \nabla ^\kappa v\right\Vert _{L^{2}}^{2}\right. \\ \left.+\Vert \nabla^{\kappa+1} (v+\tau w)\Vert
	_{L^2}^2+\tau \Vert \Delta \nabla^\kappa \eta \Vert^2_{L^2, -g'}    
	+\Vert \Delta \nabla^\kappa  \eta\Vert^2_{L^{2}, g} \right. \\ \left. +2\tau \int_{\mathbb{R}^{n}}\int_{0}^{%    
		\infty}g(s)\Delta \nabla^\kappa v\,\Delta \nabla^\kappa \eta(s)\,\ds\dx\right]. \end{multlined}
	\end{aligned}
	\end{equation}
	This energy is clearly equivalent to $E_{1}^{(\kappa+1)}$ and thus to 
	\begin{equation}\label{E_2_Eqv}
	\begin{aligned}
	\mathscr{E}_2^{(\kappa)}[\Psi] =&\, \begin{multlined}[t] \Vert\Delta\nabla^{\kappa}(\psi +\tau v)\Vert^{2}_{L^2} + \Vert\nabla^{\kappa+1}(v+\tau w)\Vert^{2}_{L^2}\\+\Vert \Delta \nabla^{\kappa} v\Vert^{2}_{L^2}
	+\Vert \Delta\nabla^{\kappa}\eta\Vert^{2}_{L^2, -g'}.
	\end{multlined}
	\end{aligned}
	\end{equation}
It also satisfies a dissipative relation in the folowing form:
	\begin{equation}  \label{Energy2_dissipation}
	\frac{\textup{d}}{\dt}E_2^{(\kappa)}(t)+\left(b -\tau c^2 \right) \left\Vert
	\Delta \nabla ^{\kappa}v\right\Vert _{L^{2}}^{2}+\frac{1}{2}\Vert \Delta \nabla^\kappa\eta
	\Vert _{L^2, -g'}^{2}\leq \, |\mathcal{F}_1^{(\kappa)}(\nabla ^\kappa (v + \tau  w))|;
	\end{equation}%
	see~\cite[Proposition 4.2]{nikolic2020mathematical} for the proof in the case $\kappa=0$. To capture dissipation of terms $\|\nabla^{\kappa+1} (\psi+\tau v)\|_{L^2}$ and $\|\nabla^{\kappa+1}(v+\tau w)\|_{L^2}$, we introduce the functionals 
	\begin{equation} \label{F_Functionals}
	\begin{aligned} 
	F_{1}^{(\kappa)}(t)&=\,\int_{\mathbb{R}^{n}}\nabla^{\kappa+1} ( \psi+\tau v)\cdot \nabla^{\kappa+1} (v+\tau w)\dx, \\
	\end{aligned}
	\end{equation} 
	and
	\begin{equation} \label{F_Functionals}
	\begin{aligned} 
	F_{2}^{(\kappa)}(t)&=\,-\tau \int_{\mathbb{R}^{n}} \nabla^{\kappa+1} v  \cdot  \nabla^{\kappa+1} (v+\tau w) \dx, \
	\end{aligned}
	\end{equation} 
	for all $t \geq 0$, where  $\kappa \in \{0, 1, \ldots, m-2\}$. They satisfy suitable dissipative relations. It can be shown that 
	\begin{equation} \label{F_1_Estimate}
	\begin{aligned}
	&\frac{\textup{d}}{\dt}F_{1}^{(\kappa)}(t)+(c^2_g -\epsilon _{0}-(c^2 -c^2_g )\epsilon
	_{1})\Vert \Delta \nabla^{\kappa}(\psi+\tau v)\Vert _{L^{2}}^{2} \\
	\leq&\, \begin{multlined}[t] \Vert \nabla^{\kappa+1} (v+\tau w)\Vert _{L^{2}}^{2}+C(\epsilon
	_{0})\Vert \Delta \nabla^\kappa v\Vert _{L^{2}}^{2}
	+C(\epsilon _{1})\Vert \Delta\nabla^\kappa \eta\Vert _{L^{2},g}^{2}+|\mathcal{F}_1^{(\kappa)}(\nabla^{\kappa}(\psi + \tau v))|.\end{multlined}
	\end{aligned}
	\end{equation}
	for any positive $\epsilon _{0}$, $\epsilon _{1}>0$. Furthermore, for any $\epsilon_{2},\epsilon_{3}>0$,  we have 
	\begin{equation}\label{F_2_Estimate}
	\begin{aligned}
	&\frac{\textup{d}}{\dt}F_{2}^{(\kappa)}(t)+(1-\epsilon_{3})\Vert\nabla^{\kappa+1}(v+\tau w)\Vert^{2}_{L^2} \\
	\leq&\, \begin{multlined}[t]\epsilon_{2}\Vert\Delta\nabla^\kappa(\psi +\tau v)\Vert^{2}_{L^2}  +C(\epsilon_{3},\epsilon_{2})(\Vert \Delta\nabla^\kappa v\Vert^{2}_{L^2}+\Vert \nabla^{\kappa+1} v\Vert^{2}_{L^2}) \\
	+\frac{1}{2}\Vert \nabla^{\kappa+1}\eta\Vert^{2}_{L^2, g}+|\mathcal{F}_1^{(\kappa)}(\tau \nabla^\kappa v)|;\end{multlined}
	\end{aligned}
	\end{equation}
	see Lemmas 4.3 and 4.4 in \cite{nikolic2020mathematical} for the case $\kappa=0$, the arguments can easily be adapted to include $\kappa \in \{0, \ldots, m-2\}$. Testing the space-differentiated third equation in the system by $\nabla^\kappa w$ yields
	\begin{equation} \label{E_0_Energy}
	\begin{aligned}  
	&\frac{1}{2}\frac{\textup{d}}{\dt}\|\nabla^\kappa w(t)\|^2_{L^2}+\frac{1}{2}\|\nabla^\kappa w\|^2_{L^2}\\
	\lesssim&\,
	\Vert
	\Delta \nabla^\kappa (\psi+\tau v)\Vert_{L^2}^2+\Vert \Delta \nabla^\kappa v\Vert _{L^{2}}^2+\Vert\Delta\nabla^\kappa \eta\Vert^{2}_{L^2, g}+|\mathcal{F}_0^{(\kappa)}(\nabla^\kappa w)|.
	\end{aligned}
	\end{equation}
	\noindent We are ready to define the Lyapunov functional 
	$\mathcal{L}^{(\kappa)}$ of order $\kappa$ as 
	\begin{equation}\label{Lyapunov_k}
	\mathcal{L}^{(\kappa)}(t)=L_1 (E_1^{(\kappa)}(t)+E_2^{(\kappa)}(t)+\varepsilon\tau \Vert \nabla ^{\kappa}w(t)\Vert^2 _{L^{2}})+F_1^{(\kappa)}(t)+L_2F_2^{(\kappa)}(t),     
	\end{equation}
	at time $t \geq 0$. There exist a constant $L_1>0$ large enough and a constant $\varepsilon>0$ small enough such that the Lyapunov functional, defined in \eqref{Lyapunov_k},  satisfies for any $\kappa\geq 0$
	\begin{equation}\label{Lyap_Main_k}
	\begin{aligned}
	&\begin{multlined}[t]\frac{\textup{d}}{\dt}\mathcal{L}^{(\kappa)}(t)+ \Vert \nabla^{\kappa+1}
	v(t)\Vert _{L^2}^{2}+\Vert \nabla^{\kappa+1} \eta \Vert _{L^2, -g'}^{2}+\mathscr{E}^{(\kappa)}_2[\Psi](t)
	+\Vert\nabla^{\kappa} w(t)\Vert
	_{L^{2}}^{2} \end{multlined}\\[1mm]
	\lesssim&\,\begin{multlined}[t]   |\mathcal{F}_0^{(\kappa)}(\nabla^{\kappa}(v+\tau w))|+|\mathcal{F}_1^{(\kappa)}(\nabla^\kappa(v+\tau w))|
	+|\mathcal{F}_0^{(\kappa)}(\nabla^\kappa w)| \\
	+|\mathcal{F}_1^{(\kappa)}(\nabla^{\kappa}(\psi+\tau v))| +|\mathcal{F}_1^{(\kappa)}(\tau \nabla^\kappa v)|,\end{multlined}
	\end{aligned}
	\end{equation}
	for all $t\in [0,T]$.  We note that having $b>\tau c^2$ in \eqref{Energy1_dissipation} and \eqref{Energy2_dissipation} is essential in obtaining dissipativity of this Lyapunov functional. \\
	\indent By integrating estimate \eqref{Lyap_Main_k} over the time interval $(0, \sigma)$ for $\sigma \in (0,t)$ and then taking the supremum over time, we obtain \eqref{energy_est_kappa}.
\end{proof}
\indent The remaining challenge in the uniform energy analysis is to control the terms $\mathcal{F}_0^{(\kappa)}$ and $\mathcal{F}_1^{(\kappa)}$ in estimate \eqref{energy_est_kappa}. To formulate the next results, we introduce here the functional
\begin{equation} \label{Lambda}
\begin{aligned}
\Lambda[\Psi](t)=\sup_{0\leq \sigma\leq t } & \begin{multlined}[t]\left(\vphantom{\Vert \nabla^2 \psi(\sigma)\Vert_{{H}^{\frac{n-2}{2}}}} \Vert v(\sigma)\Vert _{W^{1,\infty }}+\Vert w(\sigma)\Vert _{L^{\infty }}+\Vert
\nabla \psi(\sigma)\Vert _{L^{\infty }}\right.
\\[1mm] \left.\quad+\Vert \nabla \psi(\sigma)\Vert_{{H}^{\frac{n-2}{2}}}+\Vert \nabla^2 \psi(\sigma)\Vert_{{H}^{\frac{n-2}{2}}}+\Vert v(\sigma)\Vert_{{H}^{\frac{n-2}{2}}}
\right.\\[1mm] \left.+\Vert \nabla v(\sigma)\Vert_{{H}^{\frac{n-2}{2}}}+\Vert w(\sigma)\Vert_{{H}^{\frac{n-2}{2}}}\right). \end{multlined}
\end{aligned}
\end{equation}
We prove the estimates for the cases $\kappa=0$ and $\kappa \in \{1, \ldots, m-2\}$ separately.
\begin{theorem}\label{Thm:First_Eng_Estimate}
Let $b> \tau c^2>\tau c^2_g$ and $n\geq 3$. Then it holds that  
\begin{equation}  \label{Main_Estimate_D_0}
\|\Psi\|^2_{\mathcal{E}_0(t)}+\|\Psi\|^2_{\mathcal{D}_0(t)}\lesssim 
\|\Psi_0\|^2_{\mathcal{E}_0(0)} +\Lambda[\Psi](t)\|\Psi\|^2_{\mathcal{D}_0(t)}.
\end{equation}
\end{theorem}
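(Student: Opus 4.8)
The strategy is to feed the linear energy inequality of Proposition~\ref{Prop:EnergyEst_LinearParts}, specialized to $\kappa=0$, into the desired bound. With $p=0$ and recalling $\nabla^0=\mathrm{Id}$, its left-hand side is exactly $\|\Psi\|^2_{\mathcal{E}_0(t)}+\|\Psi\|^2_{\mathcal{D}_0(t)}$, while its right-hand side is $\mathcal{E}^{(0)}[\Psi_0]=\|\Psi_0\|^2_{\mathcal{E}_0(0)}$ plus the time integral over $(0,t)$ of the five nonlinear forms
\[|\mathcal{F}_0^{(0)}(v+\tau w)|,\quad |\mathcal{F}_1^{(0)}(v+\tau w)|,\quad |\mathcal{F}_0^{(0)}(w)|,\quad |\mathcal{F}_1^{(0)}(\psi+\tau v)|,\quad |\mathcal{F}_1^{(0)}(\tau v)|.\]
For $\kappa=0$ the commutator contributions in \eqref{F_k_Form} vanish, so $F^{(0)}$ reduces to the nonlinearity $2kvw+2\nabla\psi\cdot\nabla v$ itself. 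It therefore suffices to bound each of the five forms, pointwise in $\sigma$, by $\Lambda[\Psi](\sigma)\,\mathcal{D}^{(0)}[\Psi](\sigma)$; since $\sigma\mapsto\Lambda[\Psi](\sigma)$ is nondecreasing, integrating over $(0,t)$ then gives $\int_0^t\{\cdots\}\,\textup{d}\sigma\lesssim\Lambda[\Psi](t)\,\|\Psi\|^2_{\mathcal{D}_0(t)}$, and \eqref{Main_Estimate_D_0} follows. The governing structural constraint is that every cubic contribution must land in $\Lambda[\Psi]$ times the \emph{dissipation} functional $\mathcal{D}^{(0)}$ — never the energy $\mathcal{E}^{(0)}$ — and the whole argument is about routing the three factors of each term so that two of them are summands of $\mathcal{D}^{(0)}[\Psi]$.

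For $\mathcal{F}_1^{(0)}(v+\tau w)$ and $\mathcal{F}_1^{(0)}(\tau v)$, which pair $\nabla F^{(0)}$ with $\nabla(v+\tau w)$ respectively $\tau\nabla v$, I would use Hölder's inequality together with the endpoint Sobolev embedding \eqref{GNS_ineq}, the embedding \eqref{embedding_2}, and the $L^\infty$-embedding applied to $\nabla\psi,v,w$ to obtain
\[\|\nabla F^{(0)}\|_{L^2}\lesssim\|w\|_{L^\infty}\|\nabla v\|_{L^2}+\|v\|_{L^\infty}\|\nabla w\|_{L^2}+\|\nabla^2\psi\|_{H^{\frac{n-2}{2}}}\|\Delta v\|_{L^2}+\|\nabla\psi\|_{L^\infty}\|\Delta v\|_{L^2},\]
where $\|\nabla v\|_{L^{\frac{2n}{n-2}}}\lesssim\|\Delta v\|_{L^2}$ and $\|\nabla^2\psi\|_{L^n}\lesssim\|\nabla^2\psi\|_{H^{\frac{n-2}{2}}}$ have been used; combined with $\|\nabla w\|^2_{L^2}\lesssim\|\nabla(v+\tau w)\|^2_{L^2}+\|\nabla v\|^2_{L^2}$, and since each of $\|\nabla v\|^2_{L^2}$, $\|\nabla(v+\tau w)\|^2_{L^2}$, $\|\Delta v\|^2_{L^2}$ is a summand of $\mathcal{D}^{(0)}[\Psi]$, this yields $\|\nabla F^{(0)}\|_{L^2}\lesssim\Lambda[\Psi]\,(\mathcal{D}^{(0)}[\Psi])^{1/2}$. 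Pairing with $\nabla(v+\tau w)$ respectively $\tau\nabla v$ (again summands of $\mathcal{D}^{(0)}$) finishes these two forms.

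The form $\mathcal{F}_1^{(0)}(\psi+\tau v)$ needs a twist, because $\|\nabla(\psi+\tau v)\|_{L^2}$ lies in the energy but not in the dissipation: here I would integrate by parts,
\[\mathcal{F}_1^{(0)}(\psi+\tau v)=(\nabla F^{(0)},\nabla(\psi+\tau v))_{L^2}=-(F^{(0)},\Delta(\psi+\tau v))_{L^2},\]
which is legitimate since $m>n/2+1$ forces $F^{(0)}\in H^{m-1}\hookrightarrow H^1$ and $\Delta(\psi+\tau v)\in H^{m-2}\hookrightarrow L^2$, and then bound $\|F^{(0)}\|_{L^2}\lesssim\|v\|_{L^\infty}\|w\|_{L^2}+\|\nabla\psi\|_{L^\infty}\|\nabla v\|_{L^2}\lesssim\Lambda[\Psi]\,(\mathcal{D}^{(0)}[\Psi])^{1/2}$, pairing with $\|\Delta(\psi+\tau v)\|_{L^2}\le(\mathcal{D}^{(0)}[\Psi])^{1/2}$. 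Finally, the undifferentiated forms $\mathcal{F}_0^{(0)}(v+\tau w)=\mathcal{F}_0^{(0)}(v)+\tau\mathcal{F}_0^{(0)}(w)$ and $\mathcal{F}_0^{(0)}(w)$ are where the trilinear estimate \eqref{trilinear_est} is indispensable: in $\mathcal{F}_0^{(0)}(v)=(2kvw+2\nabla\psi\cdot\nabla v,v)_{L^2}$ the only genuinely delicate piece is $\int_{\R^n}v^2w\dx$, since a naive $\|w\|_{L^\infty}\|v\|^2_{L^2}$ bound is useless as $\|v\|^2_{L^2}$ sits in neither $\mathcal{E}^{(0)}$ nor $\mathcal{D}^{(0)}$, whereas $|\int_{\R^n}v^2w\dx|\lesssim\|w\|_{L^2}\|v\|_{L^n}\|v\|_{L^{\frac{2n}{n-2}}}\lesssim\|v\|_{H^{\frac{n-2}{2}}}\|w\|_{L^2}\|\nabla v\|_{L^2}\lesssim\Lambda[\Psi]\,\mathcal{D}^{(0)}[\Psi]$ by \eqref{trilinear_est}, \eqref{embedding_2}, \eqref{GNS_ineq}; similarly $|\int_{\R^n}(\nabla\psi\cdot\nabla v)v\dx|\lesssim\|\nabla\psi\|_{H^{\frac{n-2}{2}}}\|\nabla v\|^2_{L^2}\lesssim\Lambda[\Psi]\,\mathcal{D}^{(0)}[\Psi]$, while the pieces of $\mathcal{F}_0^{(0)}(w)$ are simpler, bounded by $\|v\|_{L^\infty}\|w\|^2_{L^2}$ and $\|\nabla\psi\|_{L^\infty}\|\nabla v\|_{L^2}\|w\|_{L^2}$. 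Collecting the five bounds, integrating in time, and extracting $\Lambda[\Psi](t)$ completes the argument. I expect the main obstacle to be exactly this bookkeeping — guaranteeing for every cubic term that two of its three factors are summands of $\mathcal{D}^{(0)}$, which is what dictates both the integration by parts in the $\psi+\tau v$ term and the trilinear/endpoint-Sobolev route for the lowest-order products, and what makes the estimate valid uniformly for all $n\ge3$.
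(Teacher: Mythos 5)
Your proposal is correct and follows essentially the same route as the paper: feed Proposition~\ref{Prop:EnergyEst_LinearParts} at $\kappa=0$, control the zeroth-order forms via the trilinear estimate \eqref{trilinear_est} with the endpoint embeddings \eqref{GNS_ineq}, \eqref{embedding_2}, control the gradient forms by bounding $\|\nabla F^{(0)}\|_{L^2}$ against dissipation summands, and convert $\mathcal{F}_1^{(0)}(\psi+\tau v)$ into $\mathcal{F}_0^{(0)}(\Delta(\psi+\tau v))$ exactly as the paper does. The only deviations are cosmetic choices of which factor receives the $L^\infty$ versus the $L^n/H^{\frac{n-2}{2}}$ norm in a couple of cubic terms (e.g.\ $\int vw^2$ and $H(\psi)\nabla v$), and both routings land in $\Lambda[\Psi]\,\mathcal{D}^{(0)}[\Psi]$ as required.
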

\begin{proof}
To prove the statement, we should estimate the integral terms on the right-hand side of \eqref{energy_est_kappa}. Using H\"{o}lder's inequality yields
\begin{equation}\label{I_1_I_2}
\begin{aligned}
\left\vert \mathcal{F}_0^{(0)}(v+\tau w)\right\vert =&\, \left\vert 2k \int_{\mathbb{R}^{n}}\left( vw+\nabla \psi \cdot \nabla v\right) \left( v+\tau w\right) \dx\right\vert \\
\lesssim & \, \int_{\mathbb{R}^n} \left | vw(v+\tau w) \right|\dx+\int_{\mathbb{R}^n} \left| \nabla \psi \cdot \nabla
v \left( v+\tau w\right) \right| \dx.
\end{aligned}
\end{equation}
To estimate the terms on the right, we begin with noting that
\begin{equation}
\int_{\mathbb{R}^n} \left |  wv^2 \right| \dx \lesssim \Vert w\Vert_{L^2}\Vert v\Vert_{L^n}\Vert v\Vert_{L^{\frac{2n}{n-2}}}
\lesssim \Vert w\Vert_{L^2}\Vert v\Vert_{{H}^{\frac{n-2}{2}}} \Vert  \nabla v\Vert_{{L}^2},
\end{equation}   
where we have used the estimate \eqref{trilinear_est} for tri-linear terms, the endpoint Sobolev embedding \eqref{GNS_ineq}, and the embedding \eqref{embedding_2}.  Consequently, we obtain 
  \begin{equation}\label{BMO_First_Term}
 \begin{aligned} 
\int_0^t\int_{\mathbb{R}^n} \left |  wv^2 \right| \dx \, \textup{d}\sigma  
\lesssim&\, \sup_{0\leq \sigma\leq t}\Vert v(\sigma)\Vert_{{H}^{\frac{n-2}{2}}} \int_0^t (\Vert w(\sigma)\Vert_{L^2}^2
+\Vert \nabla v(\sigma)\Vert_{L^2}^2)\, \textup{d}\sigma \\
\lesssim&\, \sup_{0\leq \sigma\leq t}\Vert v(\sigma)\Vert_{{H}^{\frac{n-2}{2}}}\|\Psi\|^2_{\mathcal{D}_0(t)}.
\end{aligned}
\end{equation}
Similarly,  we have 
 \begin{equation}
\left\vert\int_{\mathbb{R}^n} vw^2 \dx\right\vert \lesssim \Vert w\Vert_{L^2}\Vert v\Vert_{L^n}\Vert w\Vert_{L^{\frac{2n}{n-2}}}
\lesssim \Vert w\Vert_{L^2}\Vert v\Vert_{{H}^{\frac{n-2}{2}}} \Vert \nabla  w\Vert_{L^2},
\end{equation}
from which it follows that 
\begin{equation}\label{BMO_Second}
\begin{aligned}
\int_0^t \int_{\mathbb{R}^n}\left\vert vw^2 \dx\right\vert  \, \textup{d}\sigma\
\lesssim \, \sup_{0\leq \sigma\leq t}\Vert v(\sigma)\Vert_{{H}^{\frac{n-2}{2}}}\|\Psi\|^2_{\mathcal{D}_0(t)}. 
\end{aligned}
\end{equation}
By adding estimates \eqref{BMO_First_Term} and \eqref{BMO_Second}, we find that
\begin{equation}\label{I_1_Estimate_Main}
\int_0^t \int_{\mathbb{R}^n}  \left | vw(v+\tau w) \right| \dx  \textup{d} \sigma \lesssim \sup_{0\leq \sigma\leq t}\Vert v(\sigma)\Vert_{{H}^{\frac{n-2}{2}}}\|\Psi\|^2_{\mathcal{D}_0(t)}.
\end{equation}
For the remaining terms on the right in \eqref{I_1_I_2}, we have by H\"older's inequality
\begin{equation} \label{ineq}
\int_0^t \int_{\mathbb{R}^n} |\nabla \psi \cdot \nabla v (\tau w)| \dx \textup{d}\sigma \lesssim \sup_{0\leq \sigma\leq t}\Vert \nabla \psi(\sigma)\Vert_{L^\infty}\|\Psi\|^2_{\mathcal{D}_0(t)}
\end{equation}
We also note that
\begin{equation}
\begin{aligned}
\int_{\mathbb{R}^n}| v\nabla \psi \cdot \nabla v| \dx \lesssim \, \Vert \nabla v\Vert_{L^2}\Vert \nabla \psi\Vert_{L^n}\Vert v\Vert_{L^{\frac{2n}{n-2}}}
\lesssim \,  \Vert \nabla v\Vert_{L^2}\Vert \nabla \psi\Vert_{{H}^{\frac{n-2}{2}}} \Vert \nabla v\Vert_{L^2},
\end{aligned}
\end{equation}
where we have again used the estimate \eqref{trilinear_est}, the endpoint Sobolev embedding \eqref{GNS_ineq}, and the embedding \eqref{embedding_2}. This immediately yields 
\begin{equation}\label{Estimate_J_1}
\int_0^t \int_{%
	\mathbb{R}^n} |v\nabla \psi \cdot \nabla v | \dx \textup{d}s \lesssim \sup_{0\leq \sigma\leq t}\Vert\nabla \psi(\sigma)\Vert_{{H}^{\frac{n-2}{2}}}\|\Psi\|^2_{\mathcal{D}_0(t)}.  
\end{equation}
From  estimates \eqref{ineq} and \eqref{Estimate_J_1}, we conclude that 
\begin{equation}\label{I_2_Estimate_Main}
\begin{aligned}
\int_0^t \int_{\mathbb{R}^n} |\nabla \psi \cdot \nabla
v \left( v+\tau w\right)| \dx \textup{d}\sigma\lesssim \sup_{0\leq \sigma\leq t}\Big(\Vert \nabla \psi(\sigma)\Vert_{L^\infty}+\Vert\nabla \psi(\sigma)\Vert_{{H}^{\frac{n-2}{2}}}\Big)%
\|\Psi\|^2_{\mathcal{D}_0(t)}. 
\end{aligned}
\end{equation}
Altogether, we have proven that
\begin{equation}
\begin{aligned}  \label{R_1_Estimate}
&\int_0^t | \mathcal{F}_0^{(0)}(v+\tau w)(\sigma)|\, \textup{d}\sigma 
\\
\lesssim&\,
\sup_{0\leq \sigma\leq t}\Big(\Vert \nabla \psi(\sigma)\Vert_{L^\infty}+\Vert\nabla \psi(\sigma)\Vert_{{H}^{\frac{n-2}{2}}}+\Vert v(\sigma)\Vert_{{H}^{\frac{n-2}{2}}}\Big)%  
\|\Psi\|^2_{\mathcal{D}_0(t)}.
\end{aligned}
\end{equation}
We can estimate the term $\int_0^t  \mathcal{F}_0^{(0)}( w)(\sigma) \, \textup{d}\sigma$ in \eqref{energy_est_kappa} similarly, so we move on to estimating the remaining three $\mathcal{F}_1^{(0)}$ terms. We can rewrite $\mathcal{F}_1^{(0)}(v+\tau w)$ as
\begin{equation}
\begin{aligned}
&\mathcal{F}_1^{(0)}(v+\tau w) \\
=&\, 2k\int_{\mathbb{R}^{n}}\nabla \left( \dfrac{1}{\tau }v\left( v+\tau
w-v\right) +2\nabla \left( \psi+\tau v-\tau v\right)\cdot \nabla v\right) \cdot \nabla
(v+\tau w)\dx \\
=&\, \begin{multlined}[t]2k\int_{\mathbb{R}^{n}}\left( \dfrac{1}{\tau }v\nabla \left( v+\tau
w\right) +\dfrac{1}{\tau }\nabla v\left( v+\tau w\right) -\nabla \left\vert
v\right\vert ^{2}\right)\cdot \nabla (v+\tau w)\dx \\
+\int_{\mathbb{R}^{n}}\left( 2H(\psi+\tau v)\nabla v+2H(v)\nabla \left(
\psi+\tau v\right) -4\tau H(v)\nabla v\right)\cdot \nabla (v+\tau w)\dx, \end{multlined}
\end{aligned}
\end{equation}%
where we have introduced the following notation for the Hessian matrix of $f$:
\[H(f)=(\partial _{x_{i}}\partial _{x_{j}}f)_{,1\leq i,j\leq n}.\] 
By using the identity \[\left\Vert H\left( f\right)
\right\Vert _{L^{2}}= \left\Vert \Delta f\right\Vert _{L^{2}},\]
 together
with H\"{o}lder's inequality, we infer
\begin{equation}
\begin{aligned}
\vert\mathcal{F}_1^{(0)} (v+\tau w)\vert 
\lesssim& \, \begin{multlined}[t]\Vert \nabla v\Vert
_{L^2}( \Vert \nabla ( v+\tau w) \Vert
_{{H}^{\frac{n-2}{2}}}+\Vert \nabla v\Vert _{{H}^{\frac{n-2}{2}}})\Vert \nabla (v+\tau w)\Vert _{L^{2}}\\[1mm]
 +\Vert \nabla(v+\tau
w)\Vert _{L^2}^2\Vert \nabla v\Vert _{{H}^{\frac{n-2}{2}}} \\[1mm]
+ \Vert \nabla v\Vert _{L^{\infty }}\left( \Vert
\Delta (\psi+\tau v)\Vert _{L^{2}}+\Vert \Delta v\Vert
_{L^{2}}\right)\Vert \nabla (v+\tau
w)\Vert _{L^{2}}\\[1mm] +\Vert \nabla (\psi+\tau v)\Vert _{L^{\infty
}}\Vert \Delta v\Vert _{L^{2}} \Vert \nabla (v+\tau
w)\Vert _{L^{2}}.\end{multlined}
\end{aligned}
\end{equation}
Integrating the above inequality from $0$ to $t$, yields 
\begin{equation}  
\begin{aligned}
\label{R_2_Estimate}
&\int_0^t |\mathcal{F}_1^{(0)}(v+\tau w)(\sigma)|\, \textup{d}\sigma\\
\lesssim&\,\begin{multlined}[t] \sup_{0\leq \sigma\leq t}(\left\Vert
\nabla v(\sigma)\right\Vert _{{H}^{\frac{n-2}{2}}}+\left\Vert\nabla
v(\sigma)\right\Vert _{L^{\infty }}
+\left\Vert\nabla (v+\tau w)(\sigma)\right\Vert
_{{H}^{\frac{n-2}{2}}}\\+\left\Vert \nabla (\psi+\tau v)(\sigma)\right\Vert _{L^{\infty }})%
\|\Psi\|^2_{\mathcal{D}_0(t)}. \end{multlined}
\end{aligned}
\end{equation}
Similarly, we have the following bound:
\begin{equation}  \label{R_2_tilde}
\begin{aligned}
\int_0^t |\mathcal{F}_1^{(0)}(\psi+\tau v)(\sigma)|\, \textup{d}\sigma
\lesssim\,\sup_{0\leq \sigma\leq t}\big(\left\Vert
w(\sigma)\right\Vert _{{H}^{\frac{n-2}{2}}}+\left\Vert \nabla \psi (\sigma)\right\Vert _{L^{\infty
}}\big)\|\Psi\|^2_{\mathcal{D}_0(t)},
\end{aligned}
\end{equation}
as well as 
\begin{equation}  \label{R_2_tilde_}
\begin{aligned}
\int_0^t |\mathcal{F}_1^{(0)}(\tau v)(\sigma)|\, \textup{d}\sigma\
\lesssim\,\sup_{0\leq \sigma\leq t}\big(\left\Vert
w(\sigma)\right\Vert _{{H}^{\frac{n-2}{2}}}+\left\Vert \nabla \psi (\sigma)\right\Vert _{L^{\infty
}}\big)\|\Psi\|^2_{\mathcal{D}_0(t)}.
\end{aligned}
\end{equation}
By plugging estimates \eqref{R_1_Estimate}--\eqref{R_2_tilde_} into \eqref{energy_est_kappa}, we obtain the desired bound. 
\end{proof}
\noindent We next prove an analogous result when  $\kappa \in \{1, \ldots, m-2\}$.
\begin{theorem}\label{Energy_high_Order}
Let $b> \tau c^2>\tau c^2_g$ and $n \geq 3$. For any $t\geq 0$ and for any $\kappa \in \{1, \ldots, m-2\}$, it holds that  
\begin{equation}\label{Energy_k_N_Main_Estimate}
\begin{aligned}
 &\sup_{0\leq \sigma \leq t}  \mathcal{E}^{(\kappa)}[\Psi](\sigma)+  \int_0^t \mathcal{D}^{(\kappa)}[\Psi](\sigma) \, \textup{d}\sigma \\
\lesssim&\,\mathcal{E}^{(\kappa)}[\Psi](0)+\Lambda[\Psi](t)\int_0^t (\mathcal{D}^{(\kappa-1)}[\Psi](\sigma) \, +  \mathcal{D}^{(\kappa)}[\Psi](\sigma)) \, \textup{d}\sigma,
\end{aligned}
\end{equation}
where the function $\Lambda[\Psi]$ is defined in \eqref{Lambda}.  The functionals $\mathcal{E}^{(\kappa)}[\Psi]$ and $\mathcal{D}^{(\kappa)}[\Psi]$ are defined in \eqref{E_kappa} and \eqref{D_kappa}, respectively. 
\end{theorem}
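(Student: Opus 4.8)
\section*{Proof proposal for Theorem \ref{Energy_high_Order}}

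The plan is to start from the linear energy estimate \eqref{energy_est_kappa} of Proposition~\ref{Prop:EnergyEst_LinearParts} (which already delivers the left-hand side of \eqref{Energy_k_N_Main_Estimate} in terms of $\mathcal{E}^{(\kappa)}[\Psi](0)$ plus the five time-integrated nonlinear functionals, the non-critical hypothesis $b>\tau c^2$ being inherited from there) and to bound, for each fixed $\kappa\in\{1,\dots,m-2\}$, each of those functionals by $\Lambda[\Psi](t)$ times a dissipation integral of order $\kappa-1$ and $\kappa$. Inserting the explicit form \eqref{F_k_Form} of $F^{(\kappa)}$ splits every functional into a \emph{top-order} contribution, in which the full $\nabla^\kappa$ falls on $v$ or $w$, namely $v\,\nabla^\kappa w$ and $\nabla\psi\cdot\nabla^\kappa v$, and two \emph{commutator} contributions $[\nabla^\kappa,v]w$ and $[\nabla^\kappa,\nabla\psi]\cdot\nabla v$. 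To the commutators, and to their gradients needed for the $\mathcal{F}_1^{(\kappa)}$ functionals, I would apply the Kato--Ponce commutator estimate \eqref{Second_inequality_Gauss} and the product rule \eqref{First_inequaliy_Guass} of Lemma~\ref{Guass_symbol_lemma}; this rewrites them as finite sums of products of two factors in which at least one spatial derivative is spared from the top factor. After pairing with the relevant test function $\nabla^\kappa w$, $\nabla^\kappa(v+\tau w)$, $\nabla^\kappa(\psi+\tau v)$, or $\tau\nabla^\kappa v$, every resulting contribution is a trilinear expression in $(\psi,v,w)$ with a total of at most $\kappa+2$ spatial derivatives, distributed so that one factor carries only low regularity (compatible with the embedding \eqref{embedding_2}).

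I would then estimate each trilinear term with the same toolkit as in the proof of Theorem~\ref{Thm:First_Eng_Estimate}: H\"older's inequality, the trilinear estimate \eqref{trilinear_est} with the splitting $L^2\times L^n\times L^{\frac{2n}{n-2}}$ followed by the endpoint Sobolev embedding \eqref{GNS_ineq} and the embedding \eqref{embedding_2}, and, whenever a factor is $\nabla\psi$, $v$, $\nabla v$, or $w$ carrying no high-order derivative, the embedding $H^r\hookrightarrow L^{\infty}$, $r>n/2$. The accounting is arranged so that the rough factor always lands in exactly one of the norms collected in $\Lambda[\Psi]$ in \eqref{Lambda} (the $L^\infty$, $W^{1,\infty}$, and $H^{\frac{n-2}{2}}$ norms of $\nabla\psi,\nabla^2\psi,v,\nabla v,w$), while the remaining two factors are $L^2$ norms of derivatives of order $\le\kappa+1$ of $\psi+\tau v$, $v+\tau w$, $v$, or $w$ --- precisely the building blocks of $\mathcal{D}^{(\kappa)}[\Psi]$ in \eqref{D_kappa}, together with those of $\mathcal{D}^{(\kappa-1)}[\Psi]$ (the order $\kappa-1$ is forced because a commutator such as $[\nabla^\kappa,\nabla\psi]\cdot\nabla v$ produces the factor $\|\nabla^{\kappa+1}\psi\|_{L^2}\sim\|\Delta\nabla^{\kappa-1}\psi\|_{L^2}$, controlled only through $\Vert\Delta\nabla^{\kappa-1}(\psi+\tau v)\Vert_{L^2}$ and $\Vert\Delta\nabla^{\kappa-1}v\Vert_{L^2}$ in $\mathcal{D}^{(\kappa-1)}$; similarly for the $\nabla\bigl([\nabla^\kappa,v]w\bigr)$ pieces). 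Integrating in time and invoking the definitions \eqref{DissipativeEnergyNorm}--\eqref{D_kappa} yields
\begin{equation*}
\int_0^t\Big\{|\mathcal{F}_0^{(\kappa)}(\cdot)|+|\mathcal{F}_1^{(\kappa)}(\cdot)|\Big\}\, \textup{d}\sigma\ \lesssim\ \Lambda[\Psi](t)\int_0^t\bigl(\mathcal{D}^{(\kappa-1)}[\Psi](\sigma)+\mathcal{D}^{(\kappa)}[\Psi](\sigma)\bigr)\, \textup{d}\sigma,
\end{equation*}
and substituting this into \eqref{energy_est_kappa} gives \eqref{Energy_k_N_Main_Estimate}.

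The main obstacle is the careful bookkeeping of derivative counts in the commutator terms: one must check, term by term and for both $\mathcal{F}_0^{(\kappa)}$ and $\mathcal{F}_1^{(\kappa)}$, that after using \eqref{Second_inequality_Gauss}--\eqref{First_inequaliy_Guass} no factor demands more regularity than is available --- here the standing hypothesis $m>n/2+1$, hence $\kappa\le m-2$ and $\nabla\psi,v\in H^{m-1}$ with $m-1>n/2$, is exactly what is needed --- and that the low-regularity slot can always be matched to a norm appearing in $\Lambda[\Psi]$ rather than to a $\mathcal{D}^{(\kappa)}$-quantity; otherwise one would recover $\|\Psi\|_{\mathcal{E}_{m-2}}^2$ in place of $\Lambda[\Psi]$ and lose the structure \eqref{Estimate_Main_preliminary} needed to close the argument via Lemma~\ref{Lemma_Stauss}. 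A secondary, purely technical point concerns the $\mathcal{F}_1^{(\kappa)}$ functionals: as in the case $\kappa=0$, one rewrites the gradient of the nonlinearity by means of the Hessian identity $\|H(f)\|_{L^2}=\|\Delta f\|_{L^2}$ together with the algebraic substitutions $v=(v+\tau w)-\tau w$ and $\psi=(\psi+\tau v)-\tau v$, so that every factor is expressed through the energy/dissipation building blocks; this step is routine once the derivative count is under control.
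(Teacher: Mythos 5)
Your proposal is correct and follows essentially the same route as the paper: starting from Proposition~\ref{Prop:EnergyEst_LinearParts}, splitting $F^{(\kappa)}$ into the top-order terms $v\nabla^\kappa w$, $\nabla\psi\cdot\nabla^\kappa v$ and the commutators $[\nabla^\kappa,v]w$, $[\nabla^\kappa,\nabla\psi]\cdot\nabla v$, then combining Lemma~\ref{Guass_symbol_lemma} with the trilinear estimate \eqref{trilinear_est} and the embeddings \eqref{GNS_ineq}, \eqref{embedding_2} so that the rough factor lands in $\Lambda[\Psi]$ and the remaining factors in $\mathcal{D}^{(\kappa-1)}+\mathcal{D}^{(\kappa)}$, with the duality trick $\mathcal{F}_1^{(\kappa)}(\nabla^\kappa(\psi+\tau v))=\mathcal{F}_0^{(\kappa)}(\Delta\nabla^\kappa(\psi+\tau v))$ and the substitutions $\psi=(\psi+\tau v)-\tau v$, $v=(v+\tau w)-\tau w$ handling the remaining functionals. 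Your identification of why the order-$(\kappa-1)$ dissipation is forced (via $\|\nabla^{\kappa-1}w\|_{L^2}$ and $\|\nabla^{\kappa+1}\psi\|_{L^2}\lesssim\|\Delta\nabla^{\kappa-1}(\psi+\tau v)\|_{L^2}+\tau\|\nabla^{\kappa+1}v\|_{L^2}$) matches the paper's accounting exactly.
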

\begin{proof}
As in the previous proof, the crucial part is to  get appropriate estimates on the five integral terms on the right-hand side of \eqref{energy_est_kappa}. Here we have to rely on the commutator bounds.
We begin by noting that
\begin{equation}\label{I_1_Terms}
\begin{aligned}
&|\mathcal{F}_0^{(\kappa)}(\nabla^{\kappa}(v+\tau w))(\sigma)|=|\mathcal{F}_0^{(\kappa)}((\tilde{v}+\tau \tilde{w}))(\sigma)|\\
\lesssim& \, \int_{\mathbb{R}^n} |[\nabla ^{\kappa},v]w| |\left( \tilde{v}+\tau \tilde{w}\right)| \dx+\int_{%
\mathbb{R}^n} |v\tilde{w}| |\left( \tilde{v}+\tau \tilde{w}\right)|\dx \\
&+\int_{\mathbb{R}^n}|[\nabla ^{\kappa},\nabla \psi]\nabla v||\left( \tilde{v}+\tau
\tilde{w}\right)|\dx + \int_{\mathbb{R}^n} |\nabla \psi \cdot \nabla \tilde{v}||\left( \tilde{v}+\tau \tilde{w}\right)|\dx.
\end{aligned}
\end{equation}
Starting from the last term on the right, we have
\begin{equation}  \label{T_4}
\begin{aligned}
\int_{\mathbb{R}^n} |\nabla \psi \cdot \nabla \tilde{v}||\left( \tilde{v}+\tau \tilde{w}\right)|\dx \leq \int_{\mathbb{R}^n} |\tilde{v}\nabla \psi \cdot \nabla \tilde{v }| \dx+\int_{\mathbb{R}%
^n} |\tau\nabla \psi \cdot \nabla \tilde{v} \tilde{w}| \dx.
\end{aligned}
\end{equation}
We can bound the second term on the right as follows:
\begin{eqnarray*}
\int_{\mathbb{R}%
	^n} |\tau\nabla \psi \cdot \nabla \tilde{v} \tilde{w}| \dx \lesssim \Vert \nabla \psi\Vert_{L^\infty}\Vert \nabla
\tilde{v}\Vert_{L^2}\Vert \tilde{w}\Vert_{L^2},
\end{eqnarray*}
which by H\"older's inequality yields
\begin{equation}\label{J_2_k_Main}
\int_0^t \int_{\mathbb{R}%
	^n} |\tau\nabla \psi \cdot \nabla \tilde{v} \tilde{w} \vert \dx \textup{d}\sigma\lesssim \sup_{0\leq \sigma \leq t}\Vert \nabla
\psi(\sigma)\Vert_{L^\infty}\int_0^t  \mathcal{D}^{(\kappa)}[\Psi](\sigma) \, \textup{d}\sigma.
\end{equation}
We next have 
\begin{equation}
\begin{aligned}
\int_{\mathbb{R}^n} |\tilde{v}\nabla \psi \cdot \nabla \tilde{v }| \dx \lesssim \, \Vert \tilde{v}\Vert_{L^\frac{2n}{n-2}}\Vert \nabla \psi\Vert_{L^n}\Vert \nabla
\tilde{v}\Vert_{L^{2}}
\lesssim& \, \Vert 
\nabla \tilde{v}\Vert_{L^2}\Vert \nabla \psi\Vert_{L^n}\Vert \nabla
\tilde{v}\Vert_{L^{2}},
\end{aligned}
\end{equation}
which leads to 
\begin{equation}\label{J_1_k_1}
\begin{aligned}
&\int_0^t \int_{\mathbb{R}^n} |\tilde{v}\nabla \psi \cdot \nabla \tilde{v } |\dx \textup{d} \sigma\\
\lesssim&\, \sup_{0\leq \sigma\leq t} \Vert \nabla \psi(\sigma)\Vert_{{H}^{\frac{n-2}{n}}}\int_0^t (\mathcal{D}^{(\kappa-1)}[\Psi](\sigma) \, +  \mathcal{D}^{(\kappa)}[\Psi](\sigma)) \, \textup{d}\sigma.
\end{aligned}
\end{equation}
We deduce from the derived bounds that
\begin{equation}\label{T_4_Estimate}
\begin{aligned}
&\int_0^t \int_{\mathbb{R}^n} |\nabla \psi \cdot \nabla \tilde{v}||\left( \tilde{v}+\tau \tilde{w}\right)|\dx\, \textup{d}\sigma\\
\lesssim& \, \sup_{0\leq \sigma\leq t}\big(\Vert \nabla
\psi(\sigma)\Vert_{L^\infty}+\Vert \nabla \psi(\sigma)\Vert_{{H}^{\frac{n-2}{2}}}\big)\int_0^t (\mathcal{D}^{(\kappa-1)}[\Psi](\sigma) \, +  \mathcal{D}^{(\kappa)}[\Psi](\sigma)) \, \textup{d}\sigma.
\end{aligned}
\end{equation}
We next want to estimate the second term on the right in \eqref{I_1_Terms}:
\begin{equation}\label{T_2_Terms}
\begin{aligned}
\int_{%
	\mathbb{R}^n} |v\tilde{w}| |\left( \tilde{v}+\tau \tilde{w}\right)|\dx =& \, \int_{\mathbb{R}^{n}}|v\tilde{w}||\left( \tilde{v}+\tau \tilde{w}\right) |\dx \\ 
\leq& \, \int_{\mathbb{R}^{n}}|v\tilde{w}\tilde{v}|\dx+\tau \int_{\mathbb{R}^{n}}|v\tilde{w}^{2}|\dx.
\end{aligned}  
\end{equation}  
We further have
\begin{equation}
\begin{aligned}
\tau \int_{\mathbb{R}^{n}}|v\tilde{w}^{2}|\dx \lesssim   \Vert \tilde{w}\Vert_{L^\frac{2n}{n-2}}\Vert v\Vert_{L^n}\Vert 
\tilde{w}\Vert_{L^{2}}
 \lesssim& \, \Vert 
\nabla \tilde{w}\Vert_{L^2}\Vert v\Vert_{L^n}\Vert 
\tilde{w}\Vert_{L^{2}},
\end{aligned}
\end{equation}
which  yields 
\begin{equation}\label{J_k_4_Main}
\int_0^t\tau \int_{\mathbb{R}^{n}}|v\tilde{w}^{2}|\dx \textup{d}\sigma\lesssim \sup_{0\leq \sigma\leq t} \Vert v(\sigma)\Vert_{{H}^{\frac{n-2}{2}}}\int_0^t   \mathcal{D}^{(\kappa)}[\Psi](\sigma) \, \textup{d}\sigma.
\end{equation}
We can derive the same bound for the first term on the right in \eqref{T_2_Terms} after integration in time. Consequently, we have the following estimate:
\begin{equation}\label{T_2_Main_N_2}
\int_0^t \int_{\mathbb{R}^n} |v\tilde{w}| |\left( \tilde{v}+\tau \tilde{w}\right)|\dx \,  \textup{d}\sigma\lesssim  \sup_{0\leq \sigma \leq
t} \Vert v(\sigma)\Vert_{{H}^{\frac{n-2}{2}}} \int_0^t   \mathcal{D}^{(\kappa)}[\Psi](\sigma) \, \textup{d}\sigma.
\end{equation}
We next  estimate the first term on the right-hand side in \eqref{I_1_Terms}:
\begin{equation}
\begin{aligned}
 \int_{\mathbb{R}^n} |[\nabla ^{\kappa},v]w| |\left( \tilde{v}+\tau \tilde{w}\right)| \dx \lesssim& \,
\left\Vert \lbrack \nabla ^{\kappa},v]w\right\Vert _{L^{\frac{2n}{n+2}}}\left\Vert \left(
\tilde{v}+\tau \tilde{w}\right) \right\Vert _{L^{\frac{2n}{n-2}}}\\
\lesssim& \, \left\Vert \lbrack \nabla ^{\kappa},v]w\right\Vert _{L^{\frac{2n}{n+2}}}  ( \Vert 
\tilde{v}+\tau \tilde{w} \Vert _{L^{2}}+\Vert \nabla(
\tilde{v}+\tau \tilde{w}) \Vert _{L^{2}}).
\end{aligned}
\end{equation}
Thus, we have by applying the Sobolev embedding \eqref{embedding_2}
\begin{equation}\label{Com_1_N_2}
\begin{aligned}
\left\Vert \lbrack \nabla ^{\kappa},v]w\right\Vert _{L^{\frac{2n}{n+2}}}\lesssim& \, (\Vert \nabla
v\Vert _{L^{n}}\Vert \nabla ^{\kappa-1}w\Vert _{L^{2}}+\Vert w\Vert _{L^{n}}\Vert
\nabla ^{\kappa}v\Vert _{L^{2}})\notag\\
\lesssim& \, (\Vert \nabla v\Vert_{{H}^{\frac{n-2}{2}}} +\Vert w\Vert_{{H}^{\frac{n-2}{2}}} ) (\Vert \nabla ^{\kappa-1}w\Vert _{L^{2}}+\Vert
\nabla ^{\kappa}v\Vert _{L^{2}}).
\end{aligned}
\end{equation}
Therefore,  we obtain  
\begin{equation}\label{T_1_Main_Estimate_N_2}
\begin{aligned}
&\int_{0}^{t} \int_{\mathbb{R}^n} |[\nabla ^{\kappa},v]w| |\left( \tilde{v}+\tau \tilde{w}\right)| \dx\, \textup{d}\sigma\\
 \lesssim&\, \sup_{0\leq \sigma \leq
t} \left(\Vert \nabla v(\sigma)\Vert_{{H}^{\frac{n-2}{2}}}+\Vert w(\sigma)\Vert_{{H}^{\frac{n-2}{2}}}\right) \int_0^t (\mathcal{D}^{(\kappa-1)}[\Psi](\sigma) \, +  \mathcal{D}^{(\kappa)}[\Psi](\sigma)) \, \textup{d}\sigma.
\end{aligned}
\end{equation}
 It remains to estimate the third term on the right in \eqref{I_1_Terms}: 
 \begin{equation}\label{T_3_N_2_5}
 \begin{aligned}
&\int_{\mathbb{R}^n}|[\nabla ^{\kappa},\nabla \psi]\nabla v||\left( \tilde{v}+\tau
\tilde{w}\right)|\dx \\[1mm]
\lesssim& \, \Vert \lbrack \nabla ^{\kappa},\nabla \psi]\nabla v\Vert _{L^{\frac{2n}{n+2}}}\left\Vert \left(
\tilde{v}+\tau \tilde{w}\right) \right\Vert _{L^{\frac{2n}{n-2}}}\\[1mm]
\lesssim& \, \left\Vert \lbrack \nabla ^{\kappa},\nabla \psi]\nabla v\right\Vert _{L^{\frac{2n}{n+2}}} (\Vert 
\tilde{v}+\tau \tilde{w} \Vert _{L^{2}}+\Vert \nabla(
\tilde{v}+\tau \tilde{w}) \Vert _{L^{2}}).
\end{aligned}
\end{equation}
Now, by applying once more the embedding $H^{\frac{n-2}{2}}(\R^n) \hookrightarrow L^n(\R^n)$, we have 
\begin{equation}\label{Estimate_N_2_T_3}
\begin{aligned}
\left\Vert \lbrack \nabla ^{\kappa},\nabla \psi]\nabla v\right\Vert _{L^{\frac{2n}{n+2}}}\lesssim& \, (\Vert \nabla^2
\psi\Vert _{L^{n}}\Vert \nabla ^{\kappa}v\Vert _{L^{2}}+\Vert \nabla v\Vert _{L^{n}}\Vert
\nabla ^{\kappa+1}\psi\Vert _{L^{2}})\\
\lesssim& \, (\Vert \nabla^2
\psi\Vert _{{H}^{\frac{n-2}{2}}}+\Vert \nabla v\Vert  _{{H}^{\frac{n-2}{2}}})(\Vert \nabla ^{\kappa}v\Vert _{L^{2}}+\Vert
\nabla ^{\kappa+1}\psi\Vert _{L^{2}}). 
\end{aligned}
\end{equation}
Hence, we obtain from the above two estimates
\begin{equation}\label{T_3_Estimate_N_2}
\begin{aligned}
&\int_0^t \int_{\mathbb{R}^n}|[\nabla ^{\kappa},\nabla \psi]\nabla v||\left( \tilde{v}+\tau
\tilde{w}\right)|\dx\, \textup{d}\sigma \\
\lesssim&\, \sup_{0\leq \sigma \leq
t} \left(\Vert \nabla^2 \psi(\sigma)\Vert_{{H}^{\frac{n-2}{2}}}+\Vert \nabla v(\sigma)\Vert_{{H}^{\frac{n-2}{2}}}\right)  \int_0^t (\mathcal{D}^{(\kappa-1)}[\Psi](\sigma) \, +  \mathcal{D}^{(\kappa)}[\Psi](\sigma)) \, \textup{d}\sigma.
\end{aligned}
\end{equation}
Altogether, we have proven the following estimate for $\mathcal{F}_0^{(\kappa)}(\nabla^{\kappa}(v+\tau w))$:
\begin{equation}\label{I_1_Main_Est_N_2}
\int_0^t |\mathcal{F}_0^{(\kappa)}(\nabla^{\kappa}(v+\tau w))(\sigma)|\, \textup{d}\sigma\lesssim\Lambda[\Psi](t) \int_0^t (\mathcal{D}^{(\kappa-1)}[\Psi](\sigma) \, +  \mathcal{D}^{(\kappa)}[\Psi](\sigma)) \, \textup{d}\sigma,
\end{equation}
with the function $\Lambda[\Psi]$ defined in \eqref{Lambda}.\\
\indent Next we claim that
\begin{equation}  \label{I_2_4_D_k}
\int_0^t (|\mathcal{F}_1^{(\kappa)}(\nabla^\kappa(v+\tau w))(\sigma)|+|\mathcal{F}_1^{(\kappa)}(\tau \nabla^\kappa v)(\sigma)|)\, \textup{d}\sigma\lesssim  \Lambda(t) \int_0^t \mathcal{D}^{(\kappa)}[\Psi](\sigma) \, \textup{d}\sigma.
\end{equation}
We refer to \cite{Racke_Said_2019} for the proof when $n=3$. We find by the Cauchy--Schwarz inequality that
 \begin{equation}  \label{R_2_R_4}
 \begin{aligned}
&|\mathcal{F}_1^{(\kappa)}(\nabla^\kappa(v+\tau w))|(\sigma)+|\mathcal{F}_1^{(\kappa)}(\tau \nabla^\kappa v)|\\
\leq&\, \Vert \nabla F^{(\kappa)}\Vert _{L^{2}}\left(%
\Vert\nabla\left( \tilde{v}+\tau \tilde{w}\right)\Vert_{L^2}+\Vert\nabla \tilde{v}\Vert_{L^2}\right).
\end{aligned}
\end{equation}
Keeping in mind how the functional $F^{(\kappa)}$ is defined in \eqref{F_k_Form}, we obtain 
\begin{equation} \label{R_1_k_Nabla}
\begin{aligned}
\Vert \nabla F^{(\kappa)}\Vert _{L^{2}} \lesssim&\, \begin{multlined}[t] \Vert w\Vert
_{L^{\infty }}\Vert \nabla ^{\kappa+1}v\Vert _{L^{2}}+\Vert v\Vert _{L^{\infty
}}\Vert \nabla ^{\kappa+1}w\Vert _{L^{2}} \\
+ \Vert \nabla \psi\Vert _{L^{\infty }}\Vert \nabla ^{\kappa+2}v\Vert
_{L^{2}}+\Vert \nabla v\Vert _{L^{\infty }}\Vert \nabla ^{\kappa+2}\psi\Vert
_{L^{2}}. \end{multlined}
\end{aligned}
\end{equation}%
where we have used inequality \eqref{First_inequaliy_Guass}. We then estimate
\begin{equation}
\begin{aligned}
&\Vert \nabla \psi\Vert _{L^{\infty }}\Vert \nabla ^{\kappa+2}v\Vert _{L^{2}}+\Vert
\nabla v\Vert _{L^{\infty }}\Vert \nabla ^{\kappa+2}\psi\Vert _{L^{2}} \\
\lesssim& \, \left( \Vert \nabla \psi \Vert _{L^{\infty }}\Vert \Delta \nabla
^{\kappa}v\Vert _{L^{2}}+\Vert \nabla v\Vert _{L^{\infty }}\Vert \Delta \nabla
^{\kappa}\psi\Vert _{L^{2}}\right) \\
\lesssim& \, \Vert \nabla \psi\Vert _{L^{\infty }}\Vert \Delta \nabla
^{\kappa}v\Vert _{L^{2}}+\Vert \nabla v\Vert _{L^{\infty }}\left( \Vert \Delta
\nabla ^{\kappa}\left( \psi+\tau v\right) \Vert _{L^{2}}+\Vert \Delta \nabla
^{\kappa}v\Vert _{L^{2}}\right).
\end{aligned}
\end{equation}%
Inserting the above estimates into \eqref{R_1_k_Nabla} yields
\begin{equation}  \label{Nabla_R_1_k}
\Vert \nabla F^{(\kappa)}\Vert _{L^{2}}\lesssim \Lambda[\Psi](t) ( \Vert \nabla
\tilde{v}\Vert _{L^{2}}+\Vert \nabla \tilde{w}\Vert _{L^{2}}+\Vert \Delta \tilde{v}\Vert
_{L^{2}}+\Vert \Delta ( \tilde{\psi}+\tau \tilde{v}) \Vert _{L^{2}}).
\end{equation}
The above bound taken together with estimate \eqref{R_2_R_4} implies \eqref{I_2_4_D_k}. We can estimate $|\mathcal{F}^{(\kappa)}_{0}(\nabla^\kappa w)|$ analogously to arrive at
\begin{equation}\label{I_5_Main_Est}
\int_0^t |\mathcal{F}_0^{(\kappa)}(\nabla^\kappa w)(\sigma)|\, \textup{d}\sigma\lesssim \Lambda[\Psi](t) \int_0^t (\mathcal{D}^{(\kappa-1)}[\Psi](\sigma) \, +  \mathcal{D}^{(\kappa)}[\Psi](\sigma)) \, \textup{d}\sigma.
\end{equation}
Finally, we provide an estimate of $|\mathcal{F}_1^{(\kappa)}(\nabla^{\kappa}(\psi+\tau v))|$. Observe that 
\begin{eqnarray*}
|\mathcal{F}_1^{(\kappa)}(\nabla^{\kappa}(\psi+\tau v))|=|\mathcal{F}_0^{(\kappa)}(\Delta \nabla^{\kappa}(\psi+\tau v))|. 
\end{eqnarray*}
Hence we have
\begin{equation}
|\mathcal{F}_0^{(\kappa)}(\Delta \nabla^{\kappa}(\psi+\tau v))| \leq \Vert F^{(\kappa)}\Vert _{L^{2}} \Vert \Delta   ( \tilde{\psi}+\tau
\tilde{v}) \Vert_{L^2}. 
\end{equation}
Again by keeping in mind how $F^{(\kappa)}$ is defined in \eqref{F_k_Form}, we have
\begin{equation} \label{R_1_k_0_1}
\begin{aligned}
\Vert F^{(\kappa)}\Vert _{L^{2}} \lesssim& \, \begin{multlined}[t] \Vert \lbrack \nabla
^{\kappa},v]w\Vert _{L^{2}}+\Vert v\Vert _{L^{\infty }}\Vert \tilde{w}\Vert
_{L^{2}} \\
+ \Vert \lbrack \nabla ^{\kappa},\nabla \psi]\nabla v\Vert _{L^{2}}+\Vert
\nabla \psi\Vert _{L^{\infty }}\Vert \nabla \tilde{v}\Vert _{L^{2}}. \end{multlined}
\end{aligned}
\end{equation}
By exploiting the commutator estimate from Lemma~\ref{Guass_symbol_lemma}, we find
\begin{equation}
\Vert \lbrack \nabla ^{\kappa},v]w\Vert _{L^{2}}\lesssim (\Vert%
\nabla v\Vert _{L^{\infty }}\Vert \nabla ^{\kappa-1}w\Vert _{L^{2}}+\Vert w\Vert
_{L^{\infty }}\Vert \nabla ^{\kappa}v\Vert _{L^{2}}).
\end{equation}%
Similarly,
\begin{equation}
\begin{aligned}
\Vert \lbrack \nabla ^{\kappa},\nabla \psi]\nabla v\Vert _{L^{2}} \lesssim& \, (\Vert
\nabla v\Vert _{L^{\infty }}\Vert \nabla ^{\kappa+1}\psi\Vert _{L^{2}}+%
\Vert \nabla^2 \psi\Vert _{H^{\frac{n-2}{2}}}\Vert \nabla ^{\kappa}v\Vert
_{H^{1}})  \\
\lesssim& \,\begin{multlined}[t] \Vert \nabla v\Vert _{L^{\infty }}\left( \Vert \nabla ^{\kappa+1}\left(
\psi+\tau v\right) \Vert _{L^{2}}+\Vert \nabla ^{\kappa+1}v\Vert _{L^{2}}\right)\\ +%
\Vert \nabla^2 \psi\Vert _{H^{\frac{n-2}{2}}}(\Vert \nabla ^{\kappa}v\Vert
_{L^{2}}+\Vert \nabla ^{\kappa+1}v\Vert
_{L^{2}}). \end{multlined}
\end{aligned}
\end{equation}
Plugging the last two estimates into \eqref{R_1_k_0_1} results in
\begin{equation} \label{Ineqa_1_R_1}
\begin{aligned}
\Vert F^{(\kappa)}\Vert _{L^{2}} \lesssim& \,\begin{multlined}[t] \Lambda[\Psi](t)\left(
\Vert \nabla ^{\kappa-1}w\Vert _{L^{2}}+\Vert  \nabla^{\kappa+1} v\Vert _{L^{2}}\right.\\
\left.
+\Vert \nabla^{\kappa-1} \nabla v\Vert _{L^{2}}+\Vert \nabla^{\kappa-1}\Delta
\left( \psi+\tau v\right) \Vert _{L^{2}}\right) .  \end{multlined}
\end{aligned}
\end{equation}%
Altogether, we have
\begin{equation}\label{I_3_Estimate_Main}
\int_0^t |\mathcal{F}_1^{(\kappa)}(\nabla^{\kappa}(\psi+\tau v))|\, \textup{d}\sigma\lesssim \Lambda[\Psi](t) \int_0^t (\mathcal{D}^{(\kappa-1)}[\Psi](\sigma) \, +  \mathcal{D}^{(\kappa)}[\Psi](\sigma)) \, \textup{d}\sigma.
\end{equation}
This completes the proof.
\end{proof}
Having estimated the nonlinear terms, we are now ready to prove the final energy bound.
\begin{theorem}
	\label{Proposition_Main} Let $b> \tau c^2>\tau c^2_g$. Let $n \geq 3$ and $m>n/2+1$ be an integer. Then 
	the following estimate holds: 
	\begin{equation}  \label{Estimate_Main}
\|\Psi\|^2_{\mathcal{E}_{m-2}(t)}+\|\Psi\|^2_{\mathcal{D}_{m-2}(t)}\lesssim \|\Psi_0\|^2_{\mathcal{E}_{m-2}(0)}+\|\Psi\|_{\mathcal{E}_{m-2}(t)}\|\Psi\|^2_{\mathcal{D}_{m-2}(t)}.
	\end{equation}
\end{theorem}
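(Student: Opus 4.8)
The plan is to derive \eqref{Estimate_Main} by summing the two preceding energy estimates, Theorem~\ref{Thm:First_Eng_Estimate} (the case $\kappa=0$) and Theorem~\ref{Energy_high_Order} (the cases $\kappa\in\{1,\dots,m-2\}$), over the differentiation order, and then absorbing the resulting prefactor $\Lambda[\Psi](t)$ into the energy norm via Sobolev embeddings. Both input estimates already carry all the analytic weight — the Lyapunov functional construction of Proposition~\ref{Prop:EnergyEst_LinearParts} and the nonlinear commutator bounds — so what remains is organizational.

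First I would add \eqref{Main_Estimate_D_0} to the sum of the inequalities \eqref{Energy_k_N_Main_Estimate} over $\kappa=1,\dots,m-2$. For the left-hand side one uses
\[
\sum_{\kappa=0}^{m-2}\Big(\sup_{0\le\sigma\le t}\mathcal{E}^{(\kappa)}[\Psi](\sigma)\Big)\ \ge\ \sup_{0\le\sigma\le t}\sum_{\kappa=0}^{m-2}\mathcal{E}^{(\kappa)}[\Psi](\sigma)\ =\ \|\Psi\|^2_{\mathcal{E}_{m-2}(t)},
\]
together with $\sum_{\kappa=0}^{m-2}\int_0^t\mathcal{D}^{(\kappa)}[\Psi]\,\textup{d}\sigma=\|\Psi\|^2_{\mathcal{D}_{m-2}(t)}$, so that the summed left-hand sides dominate $\|\Psi\|^2_{\mathcal{E}_{m-2}(t)}+\|\Psi\|^2_{\mathcal{D}_{m-2}(t)}$. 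On the right-hand side, the initial-data terms add up to $\sum_{\kappa=0}^{m-2}\mathcal{E}^{(\kappa)}[\Psi](0)=\|\Psi_0\|^2_{\mathcal{E}_{m-2}(0)}$, and the dissipation terms satisfy
\[
\|\Psi\|^2_{\mathcal{D}_0(t)}+\sum_{\kappa=1}^{m-2}\int_0^t\big(\mathcal{D}^{(\kappa-1)}[\Psi](\sigma)+\mathcal{D}^{(\kappa)}[\Psi](\sigma)\big)\,\textup{d}\sigma\ \le\ 3\,\|\Psi\|^2_{\mathcal{D}_{m-2}(t)}.
\]
Thus the summed estimate reads $\|\Psi\|^2_{\mathcal{E}_{m-2}(t)}+\|\Psi\|^2_{\mathcal{D}_{m-2}(t)}\lesssim\|\Psi_0\|^2_{\mathcal{E}_{m-2}(0)}+\Lambda[\Psi](t)\,\|\Psi\|^2_{\mathcal{D}_{m-2}(t)}$.

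It then remains to show $\Lambda[\Psi](t)\lesssim\|\Psi\|_{\mathcal{E}_{m-2}(t)}$, which is where the hypothesis $m>n/2+1$ is used. Recalling that $\|\Psi\|_{\mathcal{E}_{m-2}(t)}$ is equivalent to $\sup_{0\le\sigma\le t}\|\Psi(\sigma)\|_{\mathcal{H}^{m-1}}$, it suffices to bound, uniformly in $\sigma\in[0,t]$, each norm appearing in the definition \eqref{Lambda} by $\|\Psi(\sigma)\|_{\mathcal{H}^{m-1}}$. I would do this term by term: since $m-1>n/2$, the embedding $H^r(\R^n)\hookrightarrow L^\infty(\R^n)$ for $r>n/2$ gives $\|w\|_{L^\infty}+\|\nabla\psi\|_{L^\infty}\lesssim\|w\|_{H^{m-1}}+\|\nabla\psi\|_{H^{m-1}}$; since $m>n/2+1$, the same embedding applied at order $m$ gives $\|v\|_{W^{1,\infty}}\lesssim\|v\|_{H^m}$; and since $(n-2)/2<m-2$, the fractional-order quantities $\|\nabla\psi\|_{H^{(n-2)/2}}$, $\|\nabla^2\psi\|_{H^{(n-2)/2}}$, $\|v\|_{H^{(n-2)/2}}$, $\|\nabla v\|_{H^{(n-2)/2}}$, $\|w\|_{H^{(n-2)/2}}$ are all controlled by the corresponding $\mathcal{H}^{m-1}$-components. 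Taking the supremum over $\sigma$ gives $\Lambda[\Psi](t)\lesssim\sup_{0\le\sigma\le t}\|\Psi(\sigma)\|_{\mathcal{H}^{m-1}}\lesssim\|\Psi\|_{\mathcal{E}_{m-2}(t)}$; inserting this bound into the summed estimate and relabeling the generic constant yields \eqref{Estimate_Main}.

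The main obstacle here is not analytic but the index bookkeeping in the last step: one must verify that $\sum_{\kappa=0}^{m-2}\mathcal{E}^{(\kappa)}$ genuinely controls the entire list of Sobolev norms entering $\Lambda[\Psi]$ — in particular the lowest-order norms of $v$ and $w$, which are recovered only through the mixed quantities $v+\tau w$ and $w$ themselves, and the highest-order norms $\|\nabla^m\psi\|_{L^2}$ and $\|\nabla^m v\|_{L^2}$, which appear only through the $\Delta\nabla^\kappa$ contributions at $\kappa=m-2$ — and that the constraint $m>n/2+1$ is precisely what renders each of the required embeddings admissible; everything else has been reduced to the two prior theorems.
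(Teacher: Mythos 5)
Your proposal is correct and follows essentially the same route as the paper: sum \eqref{Main_Estimate_D_0} with \eqref{Energy_k_N_Main_Estimate} over $\kappa=1,\dots,m-2$, control the summed dissipation terms by $\|\Psi\|^2_{\mathcal{D}_{m-2}(t)}$, and then absorb $\Lambda[\Psi](t)$ into $\|\Psi\|_{\mathcal{E}_{m-2}(t)}$ via the Sobolev embeddings available for $m>n/2+1$. The bookkeeping details you flag (sum-of-sups versus sup-of-sums, and the embeddings for each term of $\Lambda$) are exactly what the paper's proof relies on as well.
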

\begin{proof}
By summing up \eqref{Energy_k_N_Main_Estimate} over $\kappa=1,\dots, m-2$,  and adding the result to estimate \eqref{Main_Estimate_D_0} ($\kappa=0$), we find
	\begin{equation}  \label{E_I_Est_D_s}
	\|\Psi\|^2_{\mathcal{E}_{m-2}(t)}+\|\Psi\|^2_{\mathcal{D}_{m-2}(t)}
	\lesssim \, \|\Psi_0\|^2_{\mathcal{E}_{m-2}(0)}+(\Lambda[\Psi](t)+\|\Psi\|_{\mathcal{E}_{m-2}(t)}) \|\Psi\|^2_{\mathcal{D}_{m-2}(t)}.
	\end{equation}
 It remains to estimate $\Lambda[\Psi](t)$. Since $m>n/2+1$, we can employ the embeddings
 		\begin{equation}
 \begin{aligned}
 &  \nabla \psi,   \nabla v\in H^{m-1}(\mathbb{R%
 }^n) \hookrightarrow  L^\infty(\mathbb{R%
 }^n),  \\
 & \nabla \psi, \nabla^2 \psi,  \nabla v, \nabla w \in H^{m-2}(\mathbb{R%
 }^n) \hookrightarrow  {H}^{\frac{n-2}{2}}(\R^n),  \\
 & v,  w \in {H}^{m-2}(\mathbb{R%
 }^n) \hookrightarrow  {H}^{\frac{n-2}{2}}(\R^n),
 \end{aligned}
 \end{equation}
to infer that
	\begin{equation}
	\Lambda[\Psi](t)\lesssim \|\Psi\|_{\mathcal{E}_{m-2}(t)}, \quad t \in [0,T].
	\end{equation}
By plugging the above bound into \eqref{E_I_Est_D_s}, we conclude that \eqref{Estimate_Main} holds true.
\end{proof}
%%%%%%%%%%%%%%%%%%%%%%%%%%%%%%%%%%%%%%%%%%%%%
\section{Global solvability in the subcritical case} \label{Sec:GlobalExistence}
Equipped with the uniform bound \eqref{Estimate_Main}, we can now prove global solvability of the JMGT equation with memory in $\R^n$, where $n \geq 3$.
\begin{theorem} \label{Thm:GlobalExistence} Let $b> \tau c^2>\tau c^2_g$ and $n\geq 3$. Assume that $\Psi_0 \in \mathcal{H}^{m-1}$ for an integer $m>n/2+1$.  Then there exists a positive
	constant $\delta ,$ such that if
	\begin{equation} 
		\|\Psi_0\|^2_{\mathcal{E}_{m-2}(0)} \leq \delta, 
	\end{equation}
then problem \eqref{Main_System},  \eqref{Main_System_IC} has a global solution
\begin{equation}
\begin{aligned}
\Psi \in \{\Psi=(\psi, v ,w, \eta)^T:&\,  \Psi \in C([0, \infty); \mathcal{H}^{m})\}.
\end{aligned}
\end{equation}
\end{theorem}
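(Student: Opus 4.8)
The plan is to run a standard continuation argument: first invoke the short-time existence result, then upgrade the local solution to a global one by showing that its energy stays uniformly bounded in time, provided the data are small. Concretely, by Theorem~\ref{Thm:LocalExistence} the data $\Psi_0 \in \mathcal{H}^{m-1}$ with $m > n/2 + 1$ give a unique mild solution on some interval $[0,T)$; I extend it to a maximal existence time $T_{\max} \in (0,\infty]$, noting that on every $[0,t]$ with $t < T_{\max}$ the solution carries the regularity required to run the energy analysis of Section~\ref{Sec:EnergyEstimates}, so the a priori bound \eqref{Estimate_Main} of Theorem~\ref{Proposition_Main} is available there.

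Next I would set $M(t) = \|\Psi\|^2_{\mathcal{E}_{m-2}(t)} + \|\Psi\|^2_{\mathcal{D}_{m-2}(t)}$. This function is continuous and non-decreasing on $[0,T_{\max})$ — it is a supremum of the energy plus a time integral of the dissipation — with $M(0) = \|\Psi_0\|^2_{\mathcal{E}_{m-2}(0)}$. Since $\|\Psi\|_{\mathcal{E}_{m-2}(t)} \le M(t)^{1/2}$ and $\|\Psi\|^2_{\mathcal{D}_{m-2}(t)} \le M(t)$, estimate \eqref{Estimate_Main} collapses to the scalar inequality $M(t) \le C_1 + C_2 M(t)^{3/2}$ with $C_1 = \tilde{C}\,\|\Psi_0\|^2_{\mathcal{E}_{m-2}(0)}$ and $C_2 = \tilde{C}$ for a fixed $\tilde{C} \ge 1$. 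This is precisely the hypothesis of Lemma~\ref{Lemma_Stauss} with exponent $\kappa = 3/2 > 1$, and $M(0) \le C_1$ holds because $\tilde{C} \ge 1$. Choosing $\delta$ small enough that $C_1 = \tilde{C}\delta$ meets the smallness threshold of that lemma, the conclusion is $M(t) < C_1/(1 - 2/3) = 3\tilde{C}\,\|\Psi_0\|^2_{\mathcal{E}_{m-2}(0)}$ for all $t \in [0, T_{\max})$ — a bound independent of $t$.

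Finally, using the equivalence of $\sup_{0 \le \sigma \le t}\|\Psi(\sigma)\|_{\mathcal{H}^{m-1}}$ with $\|\Psi\|_{\mathcal{E}_{m-2}(t)}$ recorded after \eqref{E_kappa}, the bound on $M$ gives $\sup_{t \in [0,T_{\max})}\|\Psi(t)\|_{\mathcal{H}^{m-1}} \le R(\delta)$. If $T_{\max}$ were finite I would restart Theorem~\ref{Thm:LocalExistence} from $\Psi(t_0)$ with $t_0$ close to $T_{\max}$: its existence time depends only on $\|\Psi(t_0)\|_{\mathcal{H}^{m-1}} \le R$, hence is bounded below by some $T_\ast(R) > 0$ independent of $t_0$; taking $t_0 > T_{\max} - T_\ast$ extends the solution beyond $T_{\max}$, a contradiction. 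Therefore $T_{\max} = \infty$ and $\Psi$ is the desired global mild solution, continuous in time into $\mathcal{H}^{m-1}$.

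The genuinely hard analytical input — the time-uniform estimate \eqref{Estimate_Main}, resting on the Lyapunov functionals and commutator bounds of Section~\ref{Sec:EnergyEstimates} — is already established, so in this final step the only delicate points are bookkeeping ones: reading \eqref{Estimate_Main} off on each $[0,t]$ with $t < T_{\max}$ (never at $T_{\max}$ itself, where existence is not yet known); checking that the exponent in the Strauss-type inequality is strictly above $1$ (it is $3/2$, produced by the single extra factor $\|\Psi\|_{\mathcal{E}_{m-2}(t)}$ multiplying $\|\Psi\|^2_{\mathcal{D}_{m-2}(t)}$), which is what makes Lemma~\ref{Lemma_Stauss} applicable; and ensuring the restart time $T_\ast$ is uniform, which is exactly the dependence $T = T(\|\Psi_0\|_{\mathcal{H}^{m-1}})$ in Theorem~\ref{Thm:LocalExistence}.
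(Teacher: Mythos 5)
Your proposal is correct and follows essentially the same route as the paper: combine the local existence result with the a priori bound \eqref{Estimate_Main}, reduce it to a scalar inequality with exponent $3/2$, apply Lemma~\ref{Lemma_Stauss}, and continue the solution; the only cosmetic difference is that you run Strauss on the squared quantity $M(t)$ while the paper uses $\|\Psi\|_{\mathcal{E}_{m-2}(t)}+\|\Psi\|_{\mathcal{D}_{m-2}(t)}$ itself, both of which yield exponent $3/2$. Your write-up is in fact more careful than the paper's on the two bookkeeping points (continuity and monotonicity of $M$, and the uniform lower bound on the restart time in the continuation step), which the paper compresses into a single sentence.
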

\begin{proof}
Let $T>0$ be the maximal time of local existence given by Theorem \ref{Thm:LocalExistence}. We wish to prove by a continuity argument that the norm 
	\begin{eqnarray*}
		|||\Psi |||_{(0,t)}=\| \Psi \|_{\mathcal{E}_{m-2}(t)}+\|\Psi \|_{\mathcal{D}_{m-2}(t)}
	\end{eqnarray*}
	is  uniformly bounded for all time provided that the initial energy is sufficiently small. We have
	\[ \|\Psi\|_{\mathcal{B}_L(0,t)} \leq \| \Psi \|_{\mathcal{E}_{m-2}(t)}+\|\Psi \|_{\mathcal{D}_{m-2}(t)}=|||\Psi |||_{(0,t)},\] where 
	\begin{equation}
	 \|\Psi\|_{\mathcal{B}_L(0,t)}=\sup_{0\leq \sigma \leq t} \|\Phi\|_{\mathcal{H}^{s}}\equiv\|\Phi\|_{\mathcal{E}_{m-2}(t)}
	\end{equation} Thanks to the previous section, we have the energy bound
	\begin{equation}
	\begin{aligned}
	\|\Psi\|^2_{\mathcal{E}_{m-2}(t)}+\|\Psi\|^2_{\mathcal{D}_{m-2}(t)} 
	\lesssim&\,  \|\Psi\|^2_{\mathcal{E}_{m-2}(0)}+ \|\Psi\|_{\mathcal{E}_{m-2}(t)}\|\Psi\|^2_{\mathcal{D}_{m-2}(t)}, \quad t \in [0,T].
	\end{aligned}
	\end{equation}
	Therefore, for all $t\in[0,T]$, we have
	\begin{eqnarray}\label{Main_Y_Estimate}
	|||\Psi|||_{(0,t)}\leq \|\Psi_0\|_{\mathcal{E}_{m-2}(0)}+C |||\Psi|||_{(0,t)}^{3/2},
	\end{eqnarray}
	Thanks to Lemma \ref{Lemma_Stauss}, this means that there exists a positive constant $C$, independent of $t$, such that
	\begin{eqnarray*}
		|||\Psi|||_{(0,t)} \leq C. 
	\end{eqnarray*}
	This uniform bound implies that the local solution can be extended to $T=\infty$. 
\end{proof}
\noindent Accordingly, the JMGT equation in hereditary media with quadratic gradient nonlinearity and initial conditions $\Psi_0 \in \mathcal{H}^{m-1}$ admits a unique solution $\psi$ such that
\begin{equation}
\begin{aligned}
& \nabla \psi \in C([0, +\infty); H^{m-1}(\R^n))\, \cap \, C^1([0, +\infty); H^{m-2}(\R^n)),\\
& \psi_t \in  C([0, +\infty); H^{m}(\R^n)) \, \cap \, C^1([0, +\infty); H^{m-1}(\R^3)), \\
& \psi_{tt} \in C([0, +\infty); H^{m-1}(\R^n))\, \cap  \, C^1([0, +\infty); H^{m-2}(\R^n)), 
\end{aligned}
\end{equation}
where $m >n/2+1$ is an integer, provided that the non-critical condition $b>\tau c^2>\tau c^2_g$ holds. In particular, in a three-dimensional setting, we have $m \geq 3$.
%%%%%%%%%%%%%%%%%%%%%%%%%%%%%%%%%%%%%%%%%%%%
\bibliography{references}{}
\bibliographystyle{siam} 
\end{document}